\title[Category $\mathcal{J}$ Modules for Hamiltonian Vector Fields on a Torus]{Category $\mathcal{J}$ Modules for Hamiltonian Vector Fields on a Torus}
\author{John Talboom}
\newtheorem{thm}{Theorem}[section]
\newtheorem{lem}[thm]{Lemma}
\newtheorem{prop}[thm]{Proposition}
\newtheorem{defn}[thm]{Definition}
\newtheorem*{theorem*}{Theorem}
\begin{document}
\begin{abstract}
Modules for the Lie algebra of Hamiltonian vector fields on a torus, which admit a compatible action for the commutative algebra of multivariate Laurent polynomials, and have finite dimensional weight spaces, are called category $\mathcal{J}$. This paper classifies the indecomposable and the irreducible modules in category $\mathcal{J}$.
\end{abstract}
\maketitle
Key Words: \keywords{Indecomposable representations; Irreducible representations; Lie algebra of Hamiltonian vector fields; Weight module}

2010 Mathematics Subject Classification: \subjclass{17B10, 17B66} 
\section{Introduction}
An important class of simple infinite dimensional Lie algebras are the Lie algebras of Cartan type, which arise in the study of vector fields on a manifold. These are categorized into four series, the general, special, Hamiltonian, and contact, denoted $W_N, S_N, H_N$ and $K_N$ respectively. Type $W_N$ is the Lie algebra of derivations of $R_N=\mathbb{C}[x_1,\dots,x_N]$, whose elements are vector fields of the form $X=\sum_{i=1}^Nf_i\frac{\partial}{\partial x_i}$ where $f_i\in R_N$ (some sources allow $R_N$ to be all formal power series, $\mathbb{C}[[x_1,\dots,x_N]]$). The remaining three series are defined using the Lie derivative $\mathcal{L}_X\omega$, of a specific differential form $\omega$ with respect to vector field $X$. Specifically $S_N=\{X|\mathcal{L}_X(dx_1\wedge\dots\wedge dx_N)=0\}$, $H_{2m}=\{X|\mathcal{L}_X(\sum_{i=1}^{m}dx_i\wedge dx_{m+i})=0\}$ and $K_{2m+1}=\{X|\mathcal{L}_X(\omega)=P\omega\}$, where $\omega=dx_{2m+1}+\sum_{i=1}^{m}x_{m+i}dx_i-x_i dx_{m+i}$ and $P\in R_{2m+1}$ (see \cite{Ru} for more details).

Recent works have studied the representation theory of the Lie algebra of derivations $\text{Der}(A_N)$, where $A_N=\mathbb{C}[t_1^{\pm1},\dots,t_N^{\pm1}]$. Elements of $\text{Der}(A_N)$ can be identified with the polynomial vector fields on a torus (see Section 2). An interesting class of modules, called $(A_N,\text{Der}(A_N))$-modules is considered in \cite{Rao2}, which are modules that admit a compatible action by both $\text{Der}(A_N)$ and the commutative algebra $A_N$. In said paper Eswara Rao classifies all irreducible $(A_N,\text{Der}(A_N))$-modules with finite dimensional weight spaces. These modules are presented in \cite{B} as category $\mathcal{J}$, and the classification is extended to all indecomposable modules by showing that $\text{Der}(A_N)$ has polynomial action. 

A similar result is achieved in \cite{BT}, again using the strategy of showing polynomial action, where category $\mathcal{J}$ is considered for $\mathcal{S}_N$, the divergence zero vector fields on a torus. An inductive proof is used to show that the general case follows from the case $N=2$, which turns out to be exceptional.

The current paper now considers category $\mathcal{J}$ for the Lie algebra of Hamiltonian vector fields on an $N$ dimensional torus, denoted $\mathcal{H}_N$, where $N$ is even. The goal is to classify all indecomposable and all irreducible category $\mathcal{J}$ modules. Theorem \ref{classification} is the main result and is summarized below (the action of $\mathcal{H}_N$ will be given later).
\begin{theorem*}
Let $\mathfrak{H}_N^+$ be the vector fields in $H_N$ with non-negative degree, and $\mathfrak{h}_N$ the $2m+1$ dimensional Heisenberg algebra. Let $\lambda\in\mathbb{C}^N$ and $\mathcal{J}_{\lambda}$ a subcategory of modules in $\mathcal{J}$ supported on $\lambda+\mathbb{Z}^N$, where $N=2m\geq 2$. There is an equivalence of categories between the category of finite dimensional modules for $\mathfrak{H}_N^+\oplus\mathfrak{h}_{N}$ and $\mathcal{J}_{\lambda}$. This equivalence maps $V$ to $A_N\otimes V$ where $V$ is a finite dimensional module for $\mathfrak{H}_N^+\oplus\mathfrak{h}_{N}$.
\end{theorem*}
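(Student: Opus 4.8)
\medskip
\noindent\textbf{Proof strategy.}
The plan is to adapt the method of \cite{B} and \cite{BT}: first reduce every module in $\mathcal{J}_{\lambda}$ to a tensor module $A_N\otimes V$, then prove that $\mathcal{H}_N$ acts polynomially on it, and finally read off from the polynomial data a finite dimensional module for $\mathfrak{H}_N^+\oplus\mathfrak{h}_N$. Throughout, $V$ will be the weight space $M_{\lambda}$.

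The first step is formal. Given $M\in\mathcal{J}_{\lambda}$ with weight spaces $M_{\lambda+r}$, $r\in\mathbb{Z}^N$, each $t^r\in A_N$ acts invertibly (inverse $t^{-r}$), so the maps $t^r\colon M_{\lambda}\to M_{\lambda+r}$ are isomorphisms and $M\cong A_N\otimes V$ as $A_N$-modules, with $A_N$ acting on the left factor and $V=M_{\lambda}$ finite dimensional. The Cartan subalgebra of $\mathcal{H}_N$ acts on $t^r\otimes V$ through the weight $\lambda+r$, and the compatibility identity $X\cdot(f\cdot m)-f\cdot(X\cdot m)=X(f)\cdot m$ for $f\in A_N$ forces the remaining part of the action to take the shape $h_u\cdot(t^r\otimes v)=t^{u+r}\otimes\bigl(\Psi_u+\sigma(u,r)\bigr)v$, where the $h_u$ are the Hamiltonian vector fields of the Laurent monomials, $\sigma$ is the symplectic pairing on $\mathbb{Z}^N$, and $\Psi_u\in\mathrm{End}(V)$. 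Thus the $\mathcal{H}_N$-structure on $A_N\otimes V$ is encoded by the single family $\{\Psi_u\}$, and working out $[h_u,h_v]$ shows that this family is subject to one family of quadratic relations, essentially $[\Psi_u,\Psi_v]=\sigma(u,v)(\Psi_{u+v}-\Psi_u-\Psi_v)$.

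The heart of the argument is to show that $u\mapsto\Psi_u$ is given by a polynomial of bounded degree; this is where the finite dimensionality of the weight spaces is used in an essential way. As in \cite{BT} and \cite{Rao2}, the idea is to exhibit, for suitable $u$, abundantly many copies of $\mathfrak{sl}_2$ and of the Heisenberg algebra inside $\mathcal{H}_N$ (spanned by triples built from Cartan elements and from the $h_u$), to force their representations on the weight spaces to be finite dimensional, to deduce spectral bounds on the operators $\Psi_u$, and then to conclude by a finite difference argument that sufficiently high lattice differences of $u\mapsto\Psi_u$ vanish. I expect to run this inductively, treating $N=2$ first --- where $\mathcal{H}_2$ coincides with $\mathcal{S}_2$ and is already known to behave exceptionally --- and bootstrapping general $N$ from the rank-two coordinate subtori. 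This polynomiality is the main obstacle and the technical core.

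Once $\Psi_u$ is polynomial, decompose it into homogeneous components $\Psi_u=\sum_{k\geq 1}\Psi^{(k)}(u)$ and substitute into the quadratic relations. Matching bidegrees in $(u,v)$ should show that the Taylor coefficients of degree $\geq 2$ close up under the Poisson bracket exactly into $\bigoplus_{k\geq 0}(H_N)_k=\mathfrak{H}_N^+$, while the linear coefficients, together with the weight $\lambda$ and the central element that is visible on modules even though it vanishes in $\mathcal{H}_N$ itself, assemble into a module over the Heisenberg algebra $\mathfrak{h}_N$; the two parts commute. Verifying that it is precisely $\mathfrak{H}_N^+\oplus\mathfrak{h}_N$ that acts --- in particular that the Heisenberg, and not merely its abelianisation, governs the low-order part of the action --- is the second delicate point. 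Conversely, given a finite dimensional $\mathfrak{H}_N^+\oplus\mathfrak{h}_N$-module $V$, writing $t^u=\exp(u\cdot x)$ formally and using $h_u\leftrightarrow\sum_{k\geq 0}H_{(u\cdot x)^k/k!}$ produces explicit operators $\Psi_u$, hence an $\mathcal{H}_N$-action on $A_N\otimes V$; a direct computation verifies the defining relations of $\mathcal{H}_N$, and the resulting module lies in $\mathcal{J}_{\lambda}$ because $V$ is finite dimensional. Finally one checks functoriality: a morphism in $\mathcal{J}_{\lambda}$ restricts to a $\mathfrak{H}_N^+\oplus\mathfrak{h}_N$-module map on $\lambda$-weight spaces, a module map $V\to V'$ induces $A_N\otimes V\to A_N\otimes V'$, and these two assignments are mutually quasi-inverse, yielding the asserted equivalence of categories.
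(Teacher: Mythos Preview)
Your proposal follows the same architecture as the paper: reduce to $A_N\otimes V$, encode the action in operators $\Psi_r$ on $V$ satisfying the quadratic relation, prove polynomiality by induction from $N=2$, read off an $\mathfrak{H}_N^+\oplus\mathfrak{h}_N$-module from the Taylor coefficients, and reverse. Two points deserve comment.

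For polynomiality, your sketch invokes $\mathfrak{sl}_2$/Heisenberg subalgebras, spectral bounds and finite differences. That is indeed the mechanism behind the $N=2$ base case in \cite{BT}, but the paper's inductive step for $N>2$ proceeds differently: it applies the induction hypothesis to the rank-$(N-2)$ sublattices $\{\tilde r_i=r-r_ie_i-r_{m+i}e_{m+i}\}$, then uses commutator identities among the $H(r)$ to propagate polynomiality off those hyperplanes, together with automorphisms $\sigma_i$ and repeated appeals to the lemma that two polynomials agreeing on a sufficiently large cube must coincide, in order to patch the regions until only the origin is excluded. Your phrase ``bootstrapping from rank-two subtori'' points in the right direction, but the spectral/finite-difference picture alone does not obviously carry the inductive step; the region-patching combinatorics is where the work lies.

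The genuine gap is in the reverse functor. Given a finite-dimensional $\mathfrak{H}_N^+\oplus\mathfrak{h}_N$-module $V$, your formal exponential defines $\Psi_u=\sum_k \frac{u^k}{k!}\rho(X(k))$, but $\mathfrak{H}_N^+$ is infinite-dimensional, and nothing you have said forces this sum to be finite; without that, $A_N\otimes V$ does not even acquire a well-defined $\mathcal{H}_N$-action. The paper closes this with two ingredients you omit: in the grading $\mathfrak{H}_N^+=\bigoplus_{n\geq 0}\mathfrak{L}_n$ one has $\mathfrak{L}_0\cong\mathfrak{sp}_N$ and each $\mathfrak{L}_n$ is an \emph{irreducible} $\mathfrak{sp}_N$-module with highest weight vector $X(ne_1)$; since $[X(e_1+e_{m+1}),X(ne_1)]=nX(ne_1)$, these vectors carry pairwise distinct adjoint eigenvalues, and a finite-dimensional representation can support only finitely many such eigenvalues nontrivially. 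Hence $\rho(\mathfrak{L}_n)=0$ for all large $n$, the sum is finite, and the reverse construction goes through.

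One small correction: the weight $\lambda$ plays no role in assembling $\mathfrak{h}_N$. The Heisenberg generators are the degree-one Taylor coefficients $P^{(e_i)},P^{(e_{m+i})}$ with central element the constant term $P^{(0)}$; $\lambda$ appears only in the Cartan action $d_a(t^s\otimes v)=(s_a+\lambda_a)t^s\otimes v$.
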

To obtain this result the general strategy of \cite{BT} is applied. Since $\mathcal{H}_2\cong\mathcal{S}_2$, the basis of induction has already been shown (see \cite{BT} Proposition 3.7). Irreducible representations for the case $N=2$ are studied in \cite{JL} by Jiang and Lin and, although not shown here, their paper provides a crucial step in proving the $N=2$ case (see \cite{BT} Lemma 3.1). 

This paper is organized in the following way. Section 2 will present Lie algebra $\mathcal{H}_N$ and define its category $\mathcal{J}$ modules. It will be shown here that the action of $\mathcal{H}_N$ on any module from category $\mathcal{J}$ is determined completely by the action restricted to a single weight space $V$. Section 3 is main part of the proof for the classification theorem. Starting with the previously proven $N=2$ case as the basis of induction, it is shown that $\mathcal{H}_N$ acts by certain $\text{End}(V)$-valued polynomials for all $N$. In Section 4 the polynomial action of $\mathcal{H}_N$ is seen to be a representation of Lie algebras $\mathfrak{H}_N^+$  and $\mathfrak{h}_{N}$, and the main result can be stated. For simple modules of category $\mathcal{J}$ the action simplifies considerably. Section 5 is devoted to irreducible representations, and exhibits the main result when only simple modules are considered. 
\section{Preliminaries}
Let $A_N=\mathbb{C}[t_1^{\pm1},\dots,t_{N}^{\pm1}]$ be the algebra of Laurent polynomials over $\mathbb{C}$. Elements of $A_N$ are presented with multi-index notation $t^r=t_1^{r_1}\dots t_{N}^{r_{N}}$ where $r=(r_1,\dots,r_{N})\in\mathbb{Z}^{N}$. Let $\{e_1,\dots,e_N\}$ denote the standard basis for $\mathbb{Z}^N$. For $k\in\mathbb{Z}^N	$, $|k|=k_1+\dots+k_N$, $k!=k_1!\dots k_N!$ and $\binom{r}{k}=\frac{r!}{k!(r-k)!}$. Denote the set of non-negative integers by $\mathbb{Z}_{\geq0}$.

For $i\in\{1,\dots,N\}$, let $d_i=t_i\frac{\partial}{\partial t_i}$. The derivations of $A_N$, $\text{Der}(A_N)=\text{Span}_{\mathbb{C}}\left\{t^rd_i|i\in\{1,\dots,N\}, r\in\mathbb{Z}^{N}\right\}$, forms a Lie algebra called the Witt algebra denoted here by $\mathcal{W}_N$, with Lie bracket $[t^rd_i,t^sd_j]=s_it^{r+s}d_j-r_jt^{r+s}d_i$. By Setting $t_j=e^{\sqrt{-1}\theta_j}$ for all $j\in\{1,\dots,N\}$, $\mathcal{W}_N$ may be identified with the Lie algebra of (complex-valued) polynomial vector fields on an $N$ dimensional torus, where $\theta_j$ is the $j$th angular coordinate. 
 
The focus of this paper is on the subalgebra of Hamiltonian vector fields $\mathcal{H}_N$. Let $X=\sum_{i=1}^Nf_id_i\in\mathcal{W}_{N}$ and from now on let $N=2m$. The change of coordinates $t_j=e^{\sqrt{-1}\theta_j}$, gives $\frac{\partial}{\partial \theta_j}=\frac{\partial t_j}{\partial \theta_j}\cdot\frac{\partial}{\partial t_j}=\sqrt{-1} t_j\frac{\partial}{\partial t_j}=\sqrt{-1} d_j$. Thus $X$ can be expressed $X=-\sqrt{-1}\sum_{j=1}^{N}f_j(t)\frac{\partial}{\partial \theta_j}$. The defining property, $\mathcal{L}_X(\sum_{i=1}^{m}d\theta_i\wedge d\theta_{m+i})=0$, of $\mathcal{H}_N$ yields that for $i=1,\dots,m$, $f_i=\frac{\partial u}{\partial \theta_{m+i}}=\sqrt{-1} t_{m+i}\frac{\partial u}{\partial t_{m+i}}$ and $f_{m+i}=-\frac{\partial u}{\partial \theta_i}=-\sqrt{-1} t_i\frac{\partial u}{\partial t_i}$  for some $u\in A_N$, or $X=d_i$. Thus $\mathcal{H}_N$ is spanned by elements $X_u=\sum_{i=1}^mt_{m+i}\frac{\partial u}{\partial t_{m+i}}d_i-t_i\frac{\partial u}{\partial t_i}d_{m+i}$, and $d_i$. Obtain homogeneous elements by letting $u=t^r$, for $r\in\mathbb{Z}^N$, and set $h(r)=\sum_{i=1}^mr_{m+i}t^rd_i-r_it^rd_{m+i}$. So
\begin{equation*}
\mathcal{H}_N=\text{Span}_{\mathbb{C}}\{d_i,h(r)|i\in\{1,\dots,N\},r\in\mathbb{Z}^N\}
\end{equation*}
and has commutative Cartan subalgebra $\text{Span}_{\mathbb{C}}\{d_j|j\in\{1,\dots,N\}\}$. The Lie bracket in terms of homogeneous elements is given by $[d_i,h(r)]=r_ih(r)$  and
\begin{equation*}
[h(r),h(s)]=\sum_{i=1}^m(r_{m+i}s_i-r_is_{m+i})h(r+s).
\end{equation*}
Note that $h(0)=0$, and $[h(r),h(-r)]=0$.

Families of modules called category $\mathcal{J}$, have been defined for $\mathcal{W}_N$ and $\mathcal{S}_N$ in \cite{B} and \cite{BT} respectively. An analogous category of modules for $\mathcal{H}_N$ is defined as follows:
\begin{defn}\label{CatJ}
Let $N\geq2$ with $N$ even. An $\mathcal{H}_N$-module $J$ belongs to category $\mathcal{J}$ if the following properties hold:
\begin{enumerate}
\item[(J1)] The action of $d_i$ on $J$ is diagonalizable for all $i\in\{1,\dots,N\}$.
\item[(J2)] Module $J$ is a free $A_N$-module of finite rank.
\item[(J3)] For any $X\in\mathcal{H}_N,f\in A_N$ and $v\in J$, $X(fv)=(X f)v+f(X v).$
\end{enumerate}
\end{defn}
Any submodule of $J\in\mathcal{J}$ must be invariant under the actions of both $A_N$ and $\mathcal{H}_N$. By (J2) any module in $\mathcal{J}$ is a finite direct sum of indecomposable modules, hence the goal is to classify indecomposable modules $J\in\mathcal{J}$. Property (J1) decomposes $J$ into weight spaces, $J=\bigoplus_{\lambda\in\mathbb{C}^N}J_{\lambda}$ where $J_{\lambda}=\{v\in J|d_i v=\lambda_iv\}$. Note that $h(r)J_{\lambda}\subset J_{\lambda+r}$ since,
\begin{equation*}
d_i(h(r)v)=h(r)d_iv+[d_i,h(r)]v=(\lambda_i+r_i)h(r)v
\end{equation*}
for $v\in J_{\lambda}$. Similarly by (J3) $t^rJ_{\lambda}\subset J_{\lambda+r}$. These two relations partition the weights of $J$ into $\mathbb{Z}^N$-cosets of $\mathbb{C}^N$, and decompose $J$ into a direct sum of submodules, each corresponding to a distinct coset. Thus if $J$ is indecomposable its set of weights is one such coset $\lambda+\mathbb{Z}^N$ for $\lambda\in\mathbb{C}^N$ and $J=\bigoplus_{r\in\mathbb{Z}^N}J_{\lambda+r}$. Denote by $\mathcal{J}_{\lambda}$, the subcategory of $\mathcal{J}$ supported on $\lambda+\mathbb{Z}^N$ for a fixed $\lambda\in\mathbb{C}^N$ and from now on assume that $J\in\mathcal{J}_{\lambda}$.

Let $V=J_{\lambda}$. The invertible map $t^r:V\rightarrow J_{\lambda+r}$ identifies all weight spaces with $V$ and since $J$ is a free module for the associative algebra $A_N$ it follows that any basis for $V$ is also basis for $J$ viewed as a free $A_N$-module. The finite rank condition of (J2) implies that $V$ must be finite dimensional. This yields that $J\cong A_N\otimes V$. Homogeneous elements of $J$ will be denoted $t^s\otimes v$, for $s\in\mathbb{Z}^N,v\in V$.

The action of $h(r)$ on $J$ maps $1\otimes V\rightarrow t^r\otimes V$. This induces an endomorphism $H(r)$ on $V$ given by
\begin{equation*}
H(r)v=(t^{-r}\circ h(r))v.
\end{equation*}
The action in (J3) can then be expressed as
\begin{equation}\label{hdependsonH}
h(r)(t^s\otimes v)=\sum_{i=1}^m(r_{m+i}s_i-r_is_{m+i})t^{r+s}\otimes v+t^{r+s}\otimes H(r)v.
\end{equation}
Thus the action of $H(r)$ on $V$ determines the action of $h(r)$ on $J$.

The Lie bracket for $H(r)$ terms is
\begin{align*}
&\quad[H(r),H(s)]\\
&=[t^{-r}h(r),t^{-s}h(s)]\\
&=t^{-r}(h(r)(t^{-s}))h(s)-t^{-s}(h(s)(t^{-r}))h(r)+t^{-r-s}[h(r),h(s)]\\
&=t^{-r}\left(\sum_{i=1}^mr_{m+i}(-s_i)-r_i(-s_{m+i})\right)t^{r-s}h(s)\\
&\quad-t^{-s}\left(\sum_{i=1}^ms_{m+i}(-r_i)-s_i(-r_{m+i})\right)t^{s-r}h(r)\\
&\quad+t^{-r-s}\sum_{i=1}^m(r_{m+i}s_i-r_is_{m+i})h(r+s)\\
&=\sum_{i=1}^m(r_is_{m+i}-r_{m+i}s_i)(H(r)+H(s)-H(r+s)).
\end{align*}
\section{Polynomial Action}
The concept of a polynomial module was introduced in \cite{BB} and then generalized in \cite{BZ}. Polynomial modules were used in \cite{B} for classifying category $\mathcal{J}$ modules for $\mathcal{W}_N$. This technique was applied in the previous chapter to the $\mathcal{S}_N$ case and will be used again here. The main idea is to show that $H(r)$ acts on $V$ by an $\text{End}(V)$-valued polynomial in $r$ with constant term zero. Specifically,
\begin{prop}\label{PolynomialAction}
Let $N\geq 2$ (with $N$ even) and let $J=A_N\otimes V$ be an $\mathcal{H}_N$-module in category $\mathcal{J}$. Then $H(r)$ acts on $V$ by
\begin{equation*}
H(r)=\sum_{k\in\mathbb{Z}_{\geq0}^N}\frac{r^k}{k!}P^{(k)}-\delta_{r,0}P^{(0)},
\end{equation*}
for all $r\in\mathbb{Z}^N$, where $P^{(r)}\in\text{End}(V)$ and do not depend on $r$, and the summation is finite. 
\end{prop}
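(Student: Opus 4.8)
The plan is to induct on $N$, using the case $N=2$ (equivalently $\mathcal{S}_2$, established in \cite{BT}) as the base. So assume $N = 2m \geq 4$ and that the proposition holds for all smaller even dimensions. The key structural observation is that $\mathcal{H}_N$ contains many subalgebras isomorphic to lower-rank Hamiltonian algebras: for each way of splitting the $2m$ coordinates into $m$ conjugate pairs and then selecting a subset of those pairs, the elements $h(r)$ supported on the chosen coordinates, together with the corresponding $d_i$, span a copy of $\mathcal{H}_{N'}$ for some even $N' < N$. Restricting $J$ to such a subalgebra gives a category $\mathcal{J}$ module (property (J1)--(J3) are inherited), so by the inductive hypothesis $H(r)$ is polynomial in the restricted variables, with constant term zero, whenever $r$ is supported on a proper ``coordinate-pair block.'' The goal is to patch these partial polynomiality statements into global polynomiality on all of $\mathbb{Z}^N$.

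First I would fix notation for the restrictions: for a subset $S$ of the index pairs $\{1,\dots,m\}$, let $L_S \subset \mathbb{Z}^N$ be the sublattice of vectors supported on coordinates $\{i, m+i : i \in S\}$, and record that the inductive hypothesis gives, for each such $S$ with $|S| < m$ (and $|S| \geq 1$), an expression $H(r) = \sum_{k} \frac{r^k}{k!} P_S^{(k)} - \delta_{r,0} P_S^{(0)}$ valid for $r \in L_S$, with only finitely many nonzero $P_S^{(k)}$, each $P_S^{(k)} \in \mathrm{End}(V)$, and the multi-indices $k$ supported on the same coordinates as $S$. A consistency/compatibility argument (comparing two restrictions on the intersection of their lattices, which is itself such a restriction) shows these local polynomials agree: there is a single family $\{P^{(k)}\}$ indexed by multi-indices $k$ supported on at most $m-1$ of the coordinate pairs, so that $H(r) = \sum_k \frac{r^k}{k!}P^{(k)} - \delta_{r,0}P^{(0)}$ holds for every $r$ that lies in some $L_S$ with $|S| \le m-1$ --- i.e.\ for every $r$ whose support misses at least one full coordinate pair. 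It remains only to handle vectors $r$ with full support.

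The bridge to full support is the bracket relation derived at the end of Section~2,
\begin{equation*}
[H(r),H(s)] = \sum_{i=1}^m (r_i s_{m+i} - r_{m+i}s_i)\bigl(H(r) + H(s) - H(r+s)\bigr),
\end{equation*}
together with the abelian relation $[h(r),h(-r)]=0$ (so $H(r)$ and $H(-r)$ commute) and the degree-shift structure. Given an arbitrary $r \in \mathbb{Z}^N$, I would write $r = a + b$ where $a$ is supported on pairs $\{1,\dots,m-1\}$ and $b$ on the last pair $\{m, 2m\}$; both $a$ and $b$ omit a coordinate pair, so $H(a)$ and $H(b)$ are already known to be the global polynomial, and I can solve the bracket relation $[H(a),H(b)] = (a_{2m} b_m - a_m b_{2m})\,\dots$ --- choosing $a,b$ with the symplectic pairing $a_m b_{2m} - a_{m}b_{2m}$, wait, with $a_{2m}b_m - a_m b_{2m}$ nonzero --- to express $H(a+b) = H(r)$ in terms of $H(a)$, $H(b)$, and $[H(a),H(b)]$, hence as a polynomial in $a$ and $b$, and ultimately in $r$ after checking the decomposition can be made and the answer is independent of it. One must then verify (i) that the resulting expression is genuinely polynomial in $r$ of bounded degree --- this is where the finiteness of the summation and degree bookkeeping must be checked carefully, since dividing by the pairing coefficient is only legitimate when it is nonzero and one needs the limiting/consistency cases to fill in the locus where it vanishes --- and (ii) that the new $P^{(k)}$ with full-support $k$ are forced to vanish, or at least that the total sum stays finite.

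The main obstacle I anticipate is exactly this last patching step: the bracket relation only constrains $H(r+s)$ when the symplectic pairing $\sum_i (r_i s_{m+i} - r_{m+i}s_i)$ is nonzero, so covering the ``null'' directions (where any natural additive decomposition $r = a+b$ has vanishing pairing) requires either a secondary decomposition, a Zariski-density/polynomial-interpolation argument (a polynomial identity that holds on a large enough subset of $\mathbb{Z}^N$ holds everywhere), or a separate direct argument using the $\mathcal{W}_N$- or $\mathcal{S}_N$-flavored relations. Controlling the degree of the polynomial and proving the constant term is zero (the $-\delta_{r,0}P^{(0)}$ correction) should follow formally once polynomiality is in hand, by evaluating the bracket relations at small vectors, but the interplay between the local polynomials $P_S^{(k)}$ for different $S$ and the requirement that a single finite family works globally is the delicate bookkeeping that the proof will have to nail down.
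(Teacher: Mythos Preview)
Your inductive setup is essentially the paper's: restrict to the $\mathcal{H}_{N-2}$ supported on all but one conjugate pair, obtain polynomiality of $H(\tilde r_i)$ for $\tilde r_i\neq 0$, and then try to propagate via the bracket identity $[H(r),H(s)]=\sum_i(r_is_{m+i}-r_{m+i}s_i)\bigl(H(r)+H(s)-H(r+s)\bigr)$. The gap is in your bridging step. With $r=a+b$ where $a$ is supported on the pairs $\{1,\dots,m-1\}$ and $b$ on the pair $\{m,2m\}$, the symplectic pairing $\sum_{i=1}^m(a_ib_{m+i}-a_{m+i}b_i)$ is \emph{identically zero}: for $i<m$ both $b_i$ and $b_{m+i}$ vanish, and for $i=m$ both $a_m$ and $a_{2m}$ vanish. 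More generally, any two vectors supported on disjoint sets of conjugate pairs are automatically symplectically orthogonal. Hence $[H(a),H(b)]=0$ always, and the bracket relation gives no information whatsoever about $H(a+b)$. This is not just the ``null direction'' difficulty you flag at the end; your chosen decomposition lands in the null case for every $r$, so the argument as written never gets off the ground. (The stumble ``$a_m b_{2m}-a_mb_{2m}$, wait, $a_{2m}b_m-a_mb_{2m}$'' is exactly the symptom: both expressions are zero.)

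The paper circumvents this by refusing to split into disjoint-pair blocks. It brackets $H(\tilde r_i)$ with $H(e_i+e_k)$ and $H(e_i+e_{m+k})$ for $k\neq i$: these constants have one foot in the deleted pair $i$ and one foot in a surviving pair $k$, so the pairing picks up $-r_{m+k}$ or $r_k$ and is generically nonzero. This yields polynomiality of $r_{m+k}H(\tilde r_i+e_i+e_k)$ etc., and then a second round of such ``mixed'' brackets produces four polynomials $Q_1,\dots,Q_4$ which, after factoring and patching via Lemma~\ref{equalpolynomials} and the sign automorphisms $\sigma_i$, give polynomiality of $H(r)$ on $\{r\neq 0\}$. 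If you want to salvage your outline, the fix is the same idea: instead of $r=a+b$ with disjoint pair support, take $a=\tilde r_i$ and build up the missing $r_ie_i+r_{m+i}e_{m+i}$ piece using brackets with elements that straddle pair $i$ and some other pair.
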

The Kronecker delta function is to ensure that $H(0)=0$. This will be proven by induction on $N$ with the base case $N=2$ already shown in \cite{BT} since $\mathcal{H}_2\cong\mathcal{S}_2$.
\begin{lem}\label{basecase}
Let $N=2$ and $J=A_2\otimes V$ be a module for $\mathcal{H}_2$ in category $\mathcal{J}$. Then $H(r)$ acts on $V$ by
\begin{equation*}
H(r)=\sum_{k\in\mathbb{Z}_{\geq0}^2}\frac{r^k}{k!}P^{(k)}-\delta_{r,0}P^{(0)},
\end{equation*}
for all $r\in\mathbb{Z}^2$, where $P^{(r)}\in\text{End}(V)$ and do not depend on $r$, and the summation is finite.
\end{lem}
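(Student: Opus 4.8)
The plan is to exploit the isomorphism $\mathcal{H}_2\cong\mathcal{S}_2$ mentioned in the excerpt, so that the statement is essentially a translation of \cite{BT} Proposition 3.7 into the present notation. First I would make this isomorphism explicit: identify $h(r)=r_2t^rd_1-r_1t^rd_2$ with the corresponding divergence-zero vector field in $\mathcal{S}_2$, check that the Lie bracket $[h(r),h(s)]=(r_2s_1-r_1s_2)h(r+s)$ matches the bracket in $\mathcal{S}_2$ under this identification, and verify that the category $\mathcal{J}$ conditions (J1)--(J3) for $\mathcal{H}_2$ transport to the corresponding conditions for $\mathcal{S}_2$. Since both categories are defined by diagonalizability of the $d_i$, freeness of finite rank over $A_2$, and the Leibniz rule, this transport is immediate once the Lie algebras are identified. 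Consequently any $J\in\mathcal{J}$ for $\mathcal{H}_2$ is, verbatim, a module in category $\mathcal{J}$ for $\mathcal{S}_2$, and the endomorphisms $H(r)$ defined by $H(r)v=(t^{-r}\circ h(r))v$ coincide with the analogous operators in the $\mathcal{S}_2$ setting.

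The heart of the argument, which I would either invoke from \cite{BT} or reprove, rests on two inputs. The first is a finiteness/polynomiality bound: using that $V=J_\lambda$ is finite dimensional (from the finite rank condition), the collection $\{H(r):r\in\mathbb{Z}^2\}$ lives in the finite-dimensional space $\text{End}(V)$, and the bracket relation derived at the end of Section 2,
\begin{equation*}
[H(r),H(s)]=(r_1s_2-r_2s_1)\bigl(H(r)+H(s)-H(r+s)\bigr),
\end{equation*}
forces strong constraints. The second input is the ``exceptional'' analysis of the $N=2$ case in \cite{BT}, which ultimately relies on \cite{JL}: one shows that the map $r\mapsto H(r)$, after correcting the value at $r=0$, agrees with a polynomial map $\mathbb{Z}^2\to\text{End}(V)$. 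The standard mechanism is to first establish that $H(r)$ depends polynomially on $r$ along each of finitely many directions (e.g.\ by exhibiting a linear recursion among $H(ke_1)$, $H(ke_1+e_2)$, etc., with $k$ ranging over $\mathbb{Z}$, whose characteristic polynomial has a single repeated root $1$), and then to upgrade directional polynomiality to joint polynomiality in the multi-index $r$ using the bracket relation to interpolate off the axes. Writing the resulting polynomial in the divided-power basis $r^k/k!$ and isolating the constant term as $P^{(0)}$ (which must be subtracted off at $r=0$ to respect $h(0)=0$, hence $H(0)=0$) yields exactly the claimed form.

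The main obstacle is precisely the $N=2$ polynomiality argument itself: unlike the inductive step for larger $N$ (where one can restrict to many rank-$(N-1)$ subalgebras and glue), there is no smaller Hamiltonian torus algebra to induct from, so one genuinely needs the structural input from \cite{JL} on irreducible $\mathcal{H}_2$-modules together with an extension/\,\"Ext\,-vanishing type argument to pass from irreducibles to arbitrary finite-length (equivalently finite-rank) modules in $\mathcal{J}$. Since the excerpt explicitly states that this base case has already been carried out in \cite{BT} (Proposition 3.7, with \cite{BT} Lemma 3.1 supplying the $\mathcal{H}_2\cong\mathcal{S}_2$ reduction and the use of \cite{JL}), the cleanest route here is to state the isomorphism $\mathcal{H}_2\cong\mathcal{S}_2$ carefully, observe that it is an isomorphism of the two category $\mathcal{J}$'s compatible with the definition of $H(r)$, and then cite \cite{BT} Proposition 3.7 for the polynomial form, rather than reproducing the $\mathcal{S}_2$ proof. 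I would keep the proof of Lemma \ref{basecase} short: it is a compatibility check plus a citation, and the real work is deferred to the inductive step that proves Proposition \ref{PolynomialAction} for general $N$.
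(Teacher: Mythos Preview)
Your proposal is correct and matches the paper's own treatment: the paper does not give an independent proof of Lemma~\ref{basecase} but simply invokes the isomorphism $\mathcal{H}_2\cong\mathcal{S}_2$ and cites \cite{BT}, Proposition~3.7. Your plan to make the isomorphism and the compatibility of the two category~$\mathcal{J}$'s explicit before citing \cite{BT} is, if anything, slightly more careful than the paper, which takes this compatibility as evident.
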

See \cite{BT}, Proposition 3.7 for details, where it is shown that the action of $H(r)$ may be interpolated by an $\text{End}(V)$-valued polynomial in $r$ at all points in the plane except for the origin.

The fact that two (multivariate) polynomials are equal if they agree on a cube of arbitrary size was used repeatedly in \cite{BT} as it will be used here. The following Lemma will be required and is presented here without proof (cf. \cite{BT} Lemma 3.5).
\begin{lem}\label{equalpolynomials}
Let $S=S_1\times\dots\times S_N\in\mathbb{C}^N$, where each $S_i$ is a set with $K+1$ elements, and let $F$ and $G$ be polynomials of degree at most $K$ in $N$ variables, $X_1,\dots,X_N$, that agree on $S$. Then $F=G$.
\end{lem}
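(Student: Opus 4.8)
The plan is to set $P=F-G$ and prove the equivalent statement that a polynomial $P$ of degree at most $K$ in $N$ variables which vanishes on all of $S$ is identically zero; the conclusion $F=G$ follows at once. I would argue by induction on $N$, the whole argument resting on the single-variable fact that a nonzero polynomial of degree at most $K$ has at most $K$ roots, so one vanishing at $K+1$ distinct points must be the zero polynomial. This handles the base case $N=1$ immediately: $P$ has degree at most $K$ and vanishes at the $K+1$ points of $S_1$, hence $P\equiv 0$.

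For the inductive step, assuming the result for $N-1$ variables, I would expand $P$ as a polynomial in the last variable,
\[
P(X_1,\dots,X_N)=\sum_{j=0}^{K}P_j(X_1,\dots,X_{N-1})X_N^j,
\]
where each coefficient $P_j$ is a polynomial in $X_1,\dots,X_{N-1}$ of degree at most $K$. Fixing any point $(a_1,\dots,a_{N-1})\in S_1\times\dots\times S_{N-1}$ produces a single-variable polynomial in $X_N$ of degree at most $K$ that vanishes at all $K+1$ points of $S_N$; by the base case this polynomial is zero, so $P_j(a_1,\dots,a_{N-1})=0$ for every $j$. Since $(a_1,\dots,a_{N-1})$ ranges over the entire product $S_1\times\dots\times S_{N-1}$, with each factor of size $K+1$, each $P_j$ vanishes on this smaller grid, and the inductive hypothesis forces $P_j\equiv 0$ for all $j$. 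Therefore $P\equiv 0$ and $F=G$.

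The argument has no serious obstacle; the only point needing care is the degree bookkeeping, i.e.\ confirming that reading off the coefficient $P_j$ of $X_N^j$ keeps it within the degree bound required to apply the inductive hypothesis. This is fine because the total degree of $P$ is at most $K$, so $P_j$ has degree at most $K-j\le K$, and the restriction obtained by fixing $X_1,\dots,X_{N-1}$ has degree at most $K$ in $X_N$; both bounds are exactly what the base case and the inductive hypothesis demand, so the induction closes cleanly for every $N$.
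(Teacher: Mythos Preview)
Your proof is correct; the induction on $N$ using the single-variable root bound is the standard and natural argument, and your degree bookkeeping is fine. Note, however, that the paper itself presents this lemma without proof, deferring to \cite{BT}, Lemma~3.5, so there is no in-paper argument to compare against.
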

\begin{proof}[Proof of Proposition \ref{PolynomialAction}]
Proceed by induction on $N=2m$ with basis of induction given by Lemma \ref{basecase}. Let $\tilde{r}_i=r-r_ie_i-r_{m+i}e_{m+i}$ for $i\in\{1,\dots,m\}$. By induction hypothesis $H(\tilde{r}_i)$ acts by an $\text{End}(V)$-valued polynomial in $r$ for $\tilde{r}_i\neq0$. Let $k\in\{1,\dots,m\}$ with $k\neq i$. Then $H(e_i+e_k)$ and $H(e_i+e_{m+k})$ also act by an $\text{End}(V)$-valued polynomials in $r$ since $e_i+e_k$ and $e_i+e_{m+k}$ are constant. Then
\begin{equation*}
[H(\tilde{r}_i),H(e_i+e_k)]=-r_{m+k}(H(\tilde{r}_i)+H(e_i+e_k)-H(\tilde{r}_i+e_i+e_k)),
\end{equation*}
and
\begin{equation*}
[H(\tilde{r}_i),H(e_i+e_{m+k})]=r_k(H(\tilde{r}_i)+H(e_i+e_{m+k})-H(\tilde{r}_i+e_i+e_{m+k})).
\end{equation*}
Rearranging these equations gives
\begin{equation*}
r_{m+k}H(\tilde{r}_i+e_i+e_k)=r_{m+k}(H(\tilde{r}_i)+H(e_i+e_k))+[H(\tilde{r}_i),H(e_i+e_k)],
\end{equation*}
and
\begin{equation*}
r_kH(\tilde{r}_i+e_i+e_{m+k})=r_k(H(\tilde{r}_i)+H(e_i+e_{m+k}))-[H(\tilde{r}_i),H(e_i+e_{m+k})],
\end{equation*}
so that the terms on the right side act by an $\text{End}(V)$-valued polynomial in $r$ for $\tilde{r}_i\neq0$. Thus $r_{m+k}H(\tilde{r}_i+e_i+e_k)$ and $r_kH(\tilde{r}_i+e_i+e_{m+k})$ act by polynomial when $\tilde{r}_i\neq0$, and hence for all $r\in\mathbb{Z}^N$. It can be shown in a similar fashion that both $r_{m+k}H(\tilde{r}_i+e_{m+i}+e_k)$ and $r_kH(\tilde{r}_i+e_{m+i}+e_{m+k})$ act as polynomials for all $r$. This holds for all $i\neq k$, and in particular it follows that $r_iH(r_ie_i+r_{m+i}e_{m+i}+e_{m+i}+e_{m+k})$ acts by polynomial for all $r$. 

Now consider the Lie bracket
\begin{align*}
&[r_{m+k}H(\tilde{r}_i+e_i+e_k),r_iH(r_ie_i+r_{m+i}e_{m+i}+e_{m+i}+e_{m+k})]\\
&=r_{m+k}r_i(r_{m+i}+1)(r_k+1)\left(H(\tilde{r}_i+e_i+e_k)\right.\\
&\quad\left.+H(r_ie_i+r_{m+i}e_{m+i}+e_{m+i}+e_{m+k})-H(r+e_i+e_{m+i}+e_{k}+e_{m+k})\right).
\end{align*}
Isolating the term 
\begin{equation*}
r_{m+k}r_i(r_{m+i}+1)(r_k+1)H(r+e_i+e_{m+i}+e_{k}+e_{m+k})
\end{equation*}
on the right side shows that it must act by polynomial, call it $Q_1$, for all $r\in\mathbb{Z}^N$ since all other terms do. Similarly form the Lie brackets 
\begin{align*}
&[r_{k}H(\tilde{r}_i+e_i+e_{m+k}),r_iH(r_ie_i+r_{m+i}e_{m+i}+e_{m+i}+e_{k})],\\
&[r_{m+k}H(\tilde{r}_i+e_{m+i}+e_{k}),r_{m+i}H(r_ie_i+r_{m+i}e_{m+i}+e_{i}+e_{m+k})],\\
&[r_{k}H(\tilde{r}_i+e_{m+i}+e_{m+k}),r_{m+i}H(r_ie_i+r_{m+i}e_{m+i}+e_{i}+e_{k})]
\end{align*}
to show that
\begin{align*}
&r_{k}r_i(r_{m+i}+1)(r_{m+k}+1)H(r+e_i+e_{m+i}+e_{k}+e_{m+k}),\\
&r_{m+k}r_{m+i}(r_{i}+1)(r_k+1)H(r+e_i+e_{m+i}+e_{k}+e_{m+k}),\\
&r_{k}r_{m+i}(r_{i}+1)(r_{m+k}+1)H(r+e_i+e_{m+i}+e_{k}+e_{m+k})
\end{align*}
act by polynomials $Q_2,Q_3$, and $Q_4$ respectively, for all $r\in\mathbb{Z}^N$.

Note that 
\begin{equation*}
r_{k}(r_{m+k}+1)Q_1=r_{m+k}(r_{k}+1)Q_2
\end{equation*}
which implies that both $r_{m+k}$ and $r_{k}+1$ are factors of  $Q_1$ since they do not divide $r_{k}(r_{m+k}+1)$ (and both $r_{k}$ and $r_{m+k}+1$ are factors of $Q_2$). Comparing $Q_1$ and $Q_3$,
\begin{equation*}
r_{m+i}(r_{i}+1)Q_1=r_{i}(r_{m+i}+1)Q_3,
\end{equation*}
and so both $r_{i}$ and $r_{m+i}+1$ are factors of $Q_1$ since they do not divide $r_{m+i}(r_{i}+1)$. Thus
\begin{equation*}
Q_1=r_{m+k}r_i(r_{m+i}+1)(r_k+1)P_1
\end{equation*}
for some polynomial $P_1$, and hence $H(r+e_i+e_{m+i}+e_{k}+e_{m+k})=P_1$ for $r_i,r_{m+k}\neq0$ and $r_{m+i},r_k\neq-1$. By continuing this process of equating the $Q_i$ with appropriate linear factors, it can be shown that $H(r+e_i+e_{m+i}+e_{k}+e_{m+k})$ acts by a polynomial $P_2$ on the region $\{r_i,r_{k}\neq0\}\cap\{r_{m+i},r_{m+k}\neq-1\}$, $P_3$ on the region $\{r_{m+i},r_{m+k}\neq0\}\cap\{r_{i},r_{k}\neq-1\}$, and $P_4$ on the region $\{r_{m+i},r_{k}\neq0\}\cap\{r_{i},r_{m+k}\neq-1\}$.

Since these four regions are obtained by deleting a finite number of hyperplanes from $\mathbb{Z}^N$, their intersection contains a cube of arbitrary size. By Lemma \ref{equalpolynomials}, since $P_1, P_2, P_3$ and $P_4$ agree on this cube they must be equal, and so $H(r+e_i+e_{m+i}+e_{k}+e_{m+k})$ acts by polynomial on the union of the four regions, which is $\{(r_{i},r_{m+i})\neq(0,0),(-1,-1)\}\cap\{(r_{k},r_{m+k})\neq(0,0),(-1,-1)\}$.
Shifting coordinates, this implies that $H(r)$ acts by polynomial $P$ on the region
\begin{equation*}
\mathcal{R}=\{(r_{i},r_{m+i})\neq(0,0),(1,1)\}\cap\{(r_{k},r_{m+k})\neq(0,0),(1,1)\}.
\end{equation*}
What remains is to extend this region to all $r\in\mathbb{Z}^N\setminus\{0\}$.

Let $\mathcal{L}$ be the Lie algebra spanned by elements $H(r)$ for $r\in\mathbb{Z}^N$, and consider the automorphisms $\sigma_i:\mathcal{L}\rightarrow \mathcal{L}$ given by
\begin{equation*}
\sigma_i(H(r))=H(r_1,\dots,-r_i,\dots,-r_{m+i},\dots,r_N),
\end{equation*}
for $i\in\{1,\dots,m\}$. The region $\mathcal{R}$ may be expanded by considering automorphisms $\sigma_i$ of $\mathcal{L}$ since whatever was proven for $\mathcal{L}$ will also be true of its image under $\sigma_i$. Thus $\sigma_i(H(r))$ acts by polynomial $\bar{P}$ on $\mathcal{R}$ which implies that $H(r)$ acts by $\bar{P}$ on $\{(r_{i},r_{m+i})\neq(0,0),(-1,-1)\}\cap\{(r_{k},r_{m+k})\neq(0,0),(1,1)\}$. Lemma \ref{equalpolynomials} may be applied to the intersection of these regions, showing that $P=\bar{P}$ there, and hence $H(r)$ acts by $P$ on their union $\mathcal{R}'=\{(r_{i},r_{m+i})\neq(0,0)\}\cap\{(r_{k},r_{m+k})\neq(0,0),(1,1)\}$. In the same way, applying $\sigma_k$ to $\mathcal{R}$ yields that $H(r)$ acts by $P$ on the region $\mathcal{R}_{i,k}=\{(r_{i},r_{m+i})\neq(0,0)\}\cap\{(r_{k},r_{m+k})\neq(0,0)\}$.

Now apply Lemma \ref{equalpolynomials} to the intersection of regions $\mathcal{R}_{i,k}$ for various $k$ to show that $H(r)$ acts by $P$ on 
\begin{equation*}
\mathcal{R}_i=\bigcup_{\substack{k=1\\k\neq i}}^m\mathcal{R}_{i,k}=\{(r_i,r_{m+i})\neq(0,0)\}\cap\{\tilde{r}_i\neq0\}.
\end{equation*}
Applied the same lemma again to these regions shows that $H(r)$ acts by $P$ on 
\begin{equation*}
\bigcup_{i=1}^m\mathcal{R}_i=\bigcap_{i=1}^m\{\tilde{r}_i\neq0\}.
\end{equation*}

Fix $s\in\mathbb{Z}^N$ and let $\bar{s}_k=s_ke_k+s_{m+k}e_{m+k}$ for $k\in\{1,\dots,m\}$. Further assume that $\bar{s}_k\neq0$. Then
\begin{equation*}
[H(\bar{s}_k),H(r-\bar{s}_k)]=(s_kr_{m+k}-s_{m+k}r_k)(H(\bar{s}_k)+H(r-\bar{s}_k)-H(r))
\end{equation*}
and so
\begin{multline*}
(s_kr_{m+k}-s_{m+k}r_k)H(r)=\\
(s_kr_{m+k}-s_{m+k}r_k)(H(\bar{s}_k)+H(r-\bar{s}_k))-[H(\bar{s}_k),H(r-\bar{s}_k)].
\end{multline*}
From what was proven above, all operators on the right side of this equation act by polynomial on $\cap_{i=1}^m\{\tilde{r}_i\neq\bar{s}_k\}$, and hence $(s_kr_{m+k}-s_{m+k}r_k)H(r)$ acts by polynomial $T$ on this region. Similarly for $s'\in\mathbb{Z}^N$, $(s'_kr_{m+k}-s'_{m+k}r_k)H(r)$ acts by polynomial $T'$ on $\cap_{i=1}^m\{\tilde{r}_i\neq\bar{s}'_k\}$ and on the intersection of these regions
\begin{equation*}
(s'_kr_{m+k}-s'_{m+k}r_k)T=(s_kr_{m+k}-s_{m+k}r_k)T',
\end{equation*}
implying that $(s_kr_{m+k}-s_{m+k}r_k)$ is a factor of $T$. Thus $H(r)$ acts by on polynomial when $(s_kr_{m+k}-s_{m+k}r_k)\neq0$ and hence acts by polynomial on the region
\begin{equation*}
\mathcal{T}_{s,k}=\{s_kr_{m+k}\neq s_{m+k}r_k\}\cap\left(\bigcap_{i=1}^m\{\tilde{r}_i\neq\bar{s}_k\}\right).
\end{equation*}

Again use Lemma \ref{equalpolynomials} to patch together the regions which $H(r)$ acts by polynomial. First
\begin{equation*}
\left(\bigcap_{i=1}^m\{\tilde{r}_i\neq 0\}\right)\cup\mathcal{T}_{s,k}=\{(r_k,r_{m+k})\neq(0,0)\}\cap\{r\neq\bar{s}_k\}.
\end{equation*}
Then
\begin{equation*}
\bigcup_{s\in\mathbb{Z}^N}\left(\bigcap_{i=1}^m\{\tilde{r}_i\neq 0\}\right)\cup\mathcal{T}_{s,k}=\{(r_k,r_{m+k})\neq(0,0)\},
\end{equation*}
and
\begin{equation*}
\bigcup_{k=1}^m\{(r_k,r_{m+k})\neq(0,0)\}=\{r\neq0\}.
\end{equation*}
Thus $H(r)$ acts by $\text{End}(V)$-valued polynomial in $r$ for all $r\neq0$. To include $r=0$ and to accommodate the fact that $H(0)=0$, a delta function is added, and so $H(r)$ may be expressed
\begin{equation*}
H(r)=\sum_{k\in\mathbb{Z}_{\geq0}^N}\frac{r^k}{k!}P^{(k)}-\delta_{r,0}P^{(0)},
\end{equation*}
for all $r\in\mathbb{Z}^N$, where $P^{(r)}\in\text{End}(V)$, and the sum is finite.
\end{proof}
\section{Classification}	
The result of the previous section is that the action of the $H(r)$ operators are now in terms of the $P^{(k)}$, $k\in\mathbb{Z}_{\geq0}^N$. Following the ideas of \cite{B} and \cite{BT}, the Lie brackets between the operators $P^{(k)}$ are computed below and show that these operators realize a representation of a familiar Lie algebra. This will yield a classification of the modules in category $\mathcal{J}$.

To compute the Lie bracket $[P^{(j)},P^{(k)}]$ use Proposition \ref{PolynomialAction} and consider a region where $r,s\neq0$ so that
\begin{equation}\label{H(r)bracketPbracket}
[H(r),H(s)]=\sum_{j\in\mathbb{Z}_{\geq0}^N}\sum_{k\in\mathbb{Z}_{\geq0}^N}\frac{r^js^k}{j!k!}[P^{(j)},P^{(k)}].
\end{equation}
The left side of this evaluates to 
\begin{multline*}
\sum_{i=1}^m(r_is_{m+i}-r_{m+i}s_i)(H(r)+H(s)-H(r+s))\\=\sum_{i=1}^m(r_is_{m+i}-r_{m+i}s_i)\left(\sum_{j\in\mathbb{Z}_{\geq0}^N}\frac{r^j}{j!}P^{(j)}+\sum_{k\in\mathbb{Z}_{\geq0}^N}\frac{s^k}{k!}P^{(j)}-\sum_{\ell\in\mathbb{Z}_{\geq0}^N}\frac{(r+s)^{\ell}}{\ell!}P^{(\ell)}\right).
\end{multline*}
Then $[P^{(j)},P^{(k)}]$ is obtained by extracting the coefficient of $\frac{r^js^k}{j!k!}$ above for various $j$ and $k$. For $|j|,|k|>1$
\begin{equation*}
[P^{(j)},P^{(k)}]=\sum_{i=1}^m(j_{m+i}k_i-j_ik_{m+i})P^{(j+k-e_i-e_{m+i})}.
\end{equation*}
If $|j|\leq1$ then $[P^{(j)},P^{(k)}]=0$ for all $k$ with the exception that $[P^{(e_i)},P^{(e_{m+i})}]=P^{(0)}$ (and the usual anticommutativity property holds). 

Consider the Lie algebra of derivations of polynomials in $N$ variables,
\begin{equation*}
\text{Der}(\mathbb{C}[x_1,\dots,x_N])=\text{Span}_{\mathbb{C}}\left\{\left.x^r\frac{\partial}{\partial x_a}\right|a\in\{1,\dots,N\},r\in\mathbb{Z}_{\geq0}^N\right\}.
\end{equation*}
This has as a subalgebra the Hamiltonian Lie algebra given by
\begin{equation*}
\mathfrak{H}_N=\text{Span}_{\mathbb{C}}\left\{\left.X(r)=\sum_{i=1}^mr_{m+i}x^{r-e_{m+i}}\frac{\partial}{\partial x_i}-r_{i}x^{r-e_{i}}\frac{\partial}{\partial x_{m+i}}\right|r\in\mathbb{Z}_{\geq0}^N\right\},
\end{equation*}
with Lie bracket
\begin{equation*}
[X(r),X(s)]=\sum_{i=1}^m(r_{m+i}s_i-r_is_{m+i})X(r+s-e_i-e_{m+i}).
\end{equation*}
(Notation $\mathfrak{H}_N$ is used in place of $H_N$ from the introduction so as not to confuse it with operators $H(r)$).

Let $\mathfrak{H}_N^+=\text{Span}_{\mathbb{C}}\left\{X(r)\in\mathfrak{H}_N||r|>1\right\}$ and let $\mathfrak{h}_{N}$ be the Heisenberg algebra $\text{Span}_{\mathbb{C}}\left\{p^i,q^i,\mathfrak{c}|i\in\{1,\dots,m\}\right\}$ with $[p^i,q^i]=\mathfrak{c}$ and all other brackets zero. The brackets for $\mathcal{P}_N$ found above demonstrate the following.
\begin{prop}\label{PrepH}
The map $\rho:\mathfrak{H}_N^+\oplus\mathfrak{h}_{N}\rightarrow\text{End}(V)$ given by 
\begin{align*}
&\rho(X(r))=P^{(r)}\text{ for }|r|>1,\\
&\rho(p^i)=P^{(e_i)},\\
&\rho(q^i)=P^{(e_{m+i})},\\
&\rho(\mathfrak{c})=P^{(0)},
\end{align*}
is a finite dimensional representation of $\mathfrak{H}_N^+\oplus\mathfrak{h}_{N}$ on $V$ for $N=2m\geq2$.
\end{prop}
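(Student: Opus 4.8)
The plan is to verify that $\rho$ is a Lie algebra homomorphism by checking that it respects brackets on a spanning set, using the bracket formulas for the $P^{(k)}$ that were just derived from Proposition \ref{PolynomialAction}. First I would note that $V$ is finite dimensional by (J2), so $\rho$ automatically lands in $\operatorname{End}(V)$ and produces a finite dimensional representation; the only content is compatibility with brackets. Since $\mathfrak{H}_N^+ \oplus \mathfrak{h}_N$ is spanned by the elements $X(r)$ with $|r| > 1$ together with $p^i, q^i, \mathfrak{c}$, and $\rho$ is defined on exactly these, it suffices to compare $[\rho(a),\rho(b)]$ with $\rho([a,b])$ for $a,b$ ranging over this spanning set, splitting into cases by which of the two summands $a$ and $b$ lie in.

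The case analysis runs as follows. When both $a = X(j)$ and $b = X(k)$ with $|j|, |k| > 1$: the bracket in $\mathfrak{H}_N$ is $[X(j),X(k)] = \sum_{i=1}^m (j_{m+i}k_i - j_ik_{m+i}) X(j+k-e_i-e_{m+i})$, and I need this to equal $[P^{(j)},P^{(k)}] = \sum_{i=1}^m (j_{m+i}k_i - j_ik_{m+i}) P^{(j+k-e_i-e_{m+i})}$, which is exactly the formula computed in the previous section; the one subtlety is that $X(j+k-e_i-e_{m+i})$ might have degree $|j+k| - 2 \leq 1$, i.e. land outside $\mathfrak{H}_N^+$. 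Here I would observe that the coefficient $j_{m+i}k_i - j_ik_{m+i}$ vanishes in precisely those situations where the would-be target has degree $\leq 1$ — or more carefully, that $|j|,|k|>1$ forces $|j+k-e_i-e_{m+i}| \geq 2$ whenever the coefficient is nonzero, so $\mathfrak{H}_N^+$ is genuinely a subalgebra and the bracket closes. For the mixed cases, $[X(j), p^i]$, $[X(j), q^i]$ with $|j| > 1$: in $\mathfrak{H}_N$ these brackets are $\sum$ of $X(j + e_i - e_a - e_{m+a})$-type terms, but $|j + e_k| - 2 = |j| - 1 > 0$... I must check these brackets are zero in $\mathfrak{H}_N^+ \oplus \mathfrak{h}_N$ as a direct sum (i.e. the Heisenberg part is an ideal summand in the abstract algebra being presented) and match $[\rho(X(j)), P^{(e_i)}] = 0$, which holds by the rule stated for $|j| \leq 1$ applied symmetrically — here I would use that $[P^{(j)},P^{(k)}] = 0$ when $\min(|j|,|k|) \leq 1$ except for the distinguished pair. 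Finally $[p^i, q^j] = \delta_{ij}\mathfrak{c}$ and all brackets among $p$'s, $q$'s, $\mathfrak{c}$ vanishing must match $[P^{(e_i)}, P^{(e_{m+j})}] = \delta_{ij} P^{(0)}$ and $[P^{(0)}, \cdot] = 0$, again read off from the derived relations; I should double-check the $i \neq j$ case of $[P^{(e_i)}, P^{(e_{m+j})}]$ gives zero, since the stated rule only names $[P^{(e_i)},P^{(e_{m+i})}]$.

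The main obstacle, such as it is, lies in the bookkeeping around the degree-$\leq 1$ boundary: one must be certain that $\mathfrak{H}_N^+ \oplus \mathfrak{h}_N$ really is a Lie algebra (so that the Jacobi identity is inherited and $\rho$ has a well-defined target), that the direct-sum decomposition is a decomposition of Lie algebras with $\mathfrak{h}_N$ an ideal, and that every bracket formula on the abstract side has target terms whose degrees are consistent with the case distinctions governing the $[P^{(j)},P^{(k)}]$ formulas. Concretely I would verify: (i) if $|j|,|k| > 1$ and $j_{m+i}k_i - j_ik_{m+i} \neq 0$ then $|j + k - e_i - e_{m+i}| > 1$; (ii) the brackets $[X(j), X(k)]$ when one of the targets would have degree exactly $2$ but with the understanding $X(e_a + e_b) \in \mathfrak{H}_N$ — tracking that the identifications $p^i \leftrightarrow$ "$X(e_i)$", $q^i \leftrightarrow$ "$X(e_{m+i})$", $\mathfrak{c} \leftrightarrow$ "$X(0)$" are consistent with the Heisenberg relations being a \emph{deformation} of the commutative bracket one would naively get from the $\mathfrak{H}_N$ formula (the naive formula gives $[X(e_i),X(e_{m+i})] = X(0)$, which is why $P^{(0)}$ appears centrally). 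Once these consistency checks are in place, the proposition follows immediately from Proposition \ref{PolynomialAction} and the bracket computations preceding the statement, since every required identity $[\rho(a),\rho(b)] = \rho([a,b])$ has been matched term by term.
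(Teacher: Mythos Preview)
Your approach is correct and is exactly what the paper intends: the proposition is stated immediately after the bracket computations for the $P^{(k)}$, and the paper's only ``proof'' is the sentence ``The brackets for $\mathcal{P}_N$ found above demonstrate the following.'' Your case analysis simply spells this out.

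One small simplification: your worry in point~(i) is unnecessary. If $|j|,|k|>1$, i.e.\ $|j|,|k|\geq 2$, then $|j+k-e_i-e_{m+i}|=|j|+|k|-2\geq 2>1$ automatically, regardless of whether the coefficient $j_{m+i}k_i-j_ik_{m+i}$ vanishes. So $\mathfrak{H}_N^+$ is closed under the bracket with no edge cases, and your point~(ii) about targets of degree exactly $2$ is likewise a non-issue. The remaining checks (mixed brackets vanish because $\mathfrak{H}_N^+\oplus\mathfrak{h}_N$ is a direct sum of Lie algebras, and the Heisenberg relations match the exceptional rule $[P^{(e_i)},P^{(e_{m+i})}]=P^{(0)}$ with all other low-degree brackets zero) are exactly as you describe.
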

\begin{prop}\label{spmodules}
Let $\mathfrak{L}_n=\text{\normalfont{Span}}_{\mathbb{C}}\left\{\left.X(r)\in\mathfrak{H}_N\right||r|=n+2\right\}$. In the grading $\mathfrak{H}_N=\bigoplus_{n=-1}^{\infty}\mathfrak{L}_n$, component $\mathfrak{L}_0$ is isomorphic to the symplectic algebra $\mathfrak{sp}_N$ and each $\mathfrak{L}_n$ is an irreducible $\mathfrak{sp}_N$-module.
\end{prop}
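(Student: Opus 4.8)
The plan is to analyze the graded structure of $\mathfrak{H}_N$ explicitly via the generating-function (Poisson-bracket) description. Recall that $\mathfrak{H}_N$ is spanned by the elements $X(r)$ with $r\in\mathbb{Z}_{\geq0}^N$, and the bracket formula shows that $X(r)$ carries degree $|r|-2$; grouping by degree gives $\mathfrak{L}_n=\mathrm{Span}_{\mathbb{C}}\{X(r)\mid |r|=n+2\}$, so that $\mathfrak{L}_{-1}$ is spanned by the $N$ ``translation'' fields $X(e_a)$ and $\mathfrak{L}_0$ by the $\binom{N+1}{2}$ fields $X(r)$ with $|r|=2$. For the first assertion I would write down the correspondence $X(r)\mapsto$ the quadratic form $\tfrac12\sum$ of the corresponding monomial $x^r$ of degree $2$, under which the bracket $[X(r),X(s)]$ matches (up to the standard normalization) the Poisson bracket $\{x^r,x^s\}=\sum_{i=1}^m(\partial_{m+i}x^r\,\partial_i x^s-\partial_i x^r\,\partial_{m+i}x^s)$ on $\mathbb{C}[x_1,\dots,x_N]$ with the standard symplectic form. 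Since the Lie algebra of quadratic Hamiltonians on a $2m$-dimensional symplectic space is well known to be $\mathfrak{sp}_{2m}=\mathfrak{sp}_N$ (the quadratic forms being in bijection with the symmetric matrices that exponentiate to $Sp_N$), this gives $\mathfrak{L}_0\cong\mathfrak{sp}_N$; I would verify the dimension count $\dim\mathfrak{L}_0=\binom{N+1}{2}=\dim\mathfrak{sp}_N$ as a sanity check and exhibit a Chevalley basis (the $x_ix_{m+j}$ giving $\mathfrak{gl}_m$, the $x_ix_j$ and $x_{m+i}x_{m+j}$ giving the two nilradicals) to make the isomorphism concrete.

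For the second assertion, that each $\mathfrak{L}_n$ is an irreducible $\mathfrak{sp}_N$-module, the cleanest route is to identify $\mathfrak{L}_n$ as an $\mathfrak{sp}_N$-module with the space of homogeneous polynomials of degree $n+2$ in $x_1,\dots,x_N$, i.e. $S^{n+2}(\mathbb{C}^N)$, where $\mathbb{C}^N$ is the standard $2m$-dimensional representation of $\mathfrak{sp}_N$. Indeed the map $X(r)\mapsto x^r$ is visibly a linear isomorphism $\mathfrak{L}_n\xrightarrow{\sim}\{$degree $n+2$ monomials$\}$, and the bracket of an element of $\mathfrak{L}_0$ with $X(r)$ corresponds under this map to the Poisson bracket of a quadratic Hamiltonian with $x^r$, which is exactly the derivation action of $\mathfrak{sp}_N\subset\mathfrak{gl}(\mathbb{C}^N)$ on $S^{n+2}(\mathbb{C}^N)$ extended as a derivation; one checks this is the standard action on the symmetric power. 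Then I would invoke the classical fact that $S^d(\mathbb{C}^{2m})$ is an irreducible representation of $\mathfrak{sp}_{2m}$ for every $d\geq0$ — unlike the orthogonal case, no trace contraction is $\mathfrak{sp}$-invariant because the symplectic form is alternating, so there is no proper submodule; it is the irreducible of highest weight $d\varpi_1$. For $m=1$, $\mathfrak{sp}_2\cong\mathfrak{sl}_2$ and $S^d(\mathbb{C}^2)$ is the familiar $(d+1)$-dimensional irreducible, consistent with the $N=2$ base case.

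The main obstacle I anticipate is bookkeeping: making the dictionary between the $X(r)$-presentation used in the paper and the Poisson-bracket presentation precise, including tracking the shift $X(r+s-e_i-e_{m+i})$ against $\{x^r,x^s\}$ (the subtraction of $e_i+e_{m+i}$ is exactly the drop in degree produced by one differentiation in each symplectic-conjugate pair), and getting normalizations and signs consistent so that the stated bracket of $\mathfrak{H}_N$ really is the Poisson bracket and not a rescaling of it. A second, smaller point is that one should confirm the $\mathfrak{sp}_N$-action on $\mathfrak{L}_n$ obtained from the bracket is genuinely the derivation action on $S^{n+2}$ and not, say, a twist by a character — but since $\mathfrak{sp}_N$ is semisimple it has no nontrivial characters, so the derivation normalization is forced. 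Alternatively, if one wishes to avoid the Poisson-bracket identification, irreducibility of $\mathfrak{L}_n$ can be proven directly: pick the Cartan subalgebra of $\mathfrak{L}_0$ spanned by the $X(e_i+e_{m+i})$, note the weights of $\mathfrak{L}_n$ are multiplicity-free along the obvious monomial basis, locate the highest weight vector $X((n+2)e_1)$, and show that repeated brackets with the negative root vectors (the $X(e_i+e_j)$-type elements) reach every monomial — this is a finite, if tedious, reachability argument. I would present the symmetric-power identification as the main argument and relegate the direct check to a remark.
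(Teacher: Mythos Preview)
Your argument is correct, but the route differs from the paper's. You identify $\mathfrak{L}_n$ with the symmetric power $S^{n+2}(\mathbb{C}^N)$ via the Poisson-bracket/Hamiltonian dictionary $X(r)\leftrightarrow x^r$ and then invoke the standard fact that all symmetric powers of the defining representation of $\mathfrak{sp}_{2m}$ are irreducible (highest weight $d\varpi_1$). The paper instead works intrinsically: it writes down the isomorphism $\mathfrak{L}_0\cong\mathfrak{sp}_N$ in terms of matrix units $E_{i,j}-E_{m+j,m+i}$, $E_{i,m+j}+E_{j,m+i}$, $E_{m+i,j}+E_{m+j,i}$, fixes the Cartan and raising operators, and then proves irreducibility of each $\mathfrak{L}_n$ by showing it contains a \emph{unique} highest weight vector (up to scalar), namely the one supported on the monomial $x_1^{n+2}$, appealing to Weyl's complete reducibility to conclude. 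In effect the paper carries out precisely the ``alternative'' direct argument you sketch at the end and relegate to a remark. Your approach is cleaner and more conceptual, and it explains \emph{why} the modules are irreducible (no $\mathfrak{sp}$-invariant contraction on symmetric tensors); the paper's approach is more self-contained, requiring no outside representation-theoretic input beyond Weyl's theorem, and it makes the explicit Chevalley-type basis visible, which is convenient for the application in Theorem~\ref{irreducibleclassification} where the $E_{a,b}$ appear by name. Either argument would serve; your bookkeeping worries about signs and the $e_i+e_{m+i}$ shift are real but routine, and your observation that $\mathfrak{sp}_N$ has no nontrivial characters disposes of any twist ambiguity.
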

\begin{proof}
Let $E_{ab}$ be the matrix with a $1$ in entry $(a,b)$ and zeros elsewhere. An isomorphism between $\mathfrak{L}_0$ and $\mathfrak{sp}_N$ is obtained by identifying basis elements  
\begin{equation*}
 x_i\frac{\partial}{\partial x_{j}}-x_{m+j}\frac{\partial}{\partial x_{m+i}}, x_i\frac{\partial}{\partial x_{m+j}}+x_{j}\frac{\partial}{\partial x_{m+i}}, x_{m+i}\frac{\partial}{\partial x_{j}}+x_{m+j}\frac{\partial}{\partial x_{i}},
\end{equation*}
of $\mathfrak{L}_0$ with $E_{i,j}-E_{m+j,m+i}, E_{i,m+j}+E_{j,m+i}, E_{m+i,j}+E_{m+j,i}$ respectively from $\mathfrak{sp}_N$, where $i,j\in\{1,\dots,m\}$. Then each $\mathfrak{L}_n$ is an $\mathfrak{sp}_N$-module via the adjoint action of $\mathfrak{L}_0$. The Cartan subalgebra of $\mathfrak{sp}_N$ is spanned by diagonal elements $ E_{i,i}-E_{m+i,m+i}$, and raising operators (positive root spaces) are spanned by  $E_{i,m+i},E_{i,j}-E_{m+j,m+i}, E_{i,m+j}+E_{j,m+i}$ for $i<j$.

Every finite dimensional simple $\mathfrak{sp}_N$-module is a highest weight module, and so by Weyl's Theorem on complete reducibility it suffices to show that there exists a unique (up to scalar) highest weight vector in each $\mathfrak{L}_i$. This requires showing that there is a unique $v\in\mathfrak{L}_n$ which is annihilated by the adjoint action of all raising operators. Elements $X(r)$ of $\mathfrak{L}_n$ are weight vectors as $\left[x_i\frac{\partial}{\partial x_{i}}-x_{m+i}\frac{\partial}{\partial x_{m+i}},X(r)\right]=(r_i-r_{m+i})X(r)$ for all $i$, and any scalar multiple of $X(ne_1)=-x_1^n\frac{\partial}{\partial x_{m+1}}$ is a highest weight vector of $\mathfrak{L}_n$ since for any $i<j$,
\begin{equation*}
\left[x_i\frac{\partial}{\partial x_{m+i}}, x_1^n\frac{\partial}{\partial x_{m+1}}\right]=0,
\end{equation*}
\begin{equation*}
\left[ x_i\frac{\partial}{\partial x_{j}}-x_{m+j}\frac{\partial}{\partial x_{m+i}}, x_1^n\frac{\partial}{\partial x_{m+1}}\right]=0,
\end{equation*}
\begin{equation*}
\left[ x_i\frac{\partial}{\partial x_{m+j}}+x_{j}\frac{\partial}{\partial x_{m+i}}, x_1^n\frac{\partial}{\partial x_{m+1}}\right]=0.
\end{equation*}
It remains to show that there are no other highest weight vectors.

A weight vector $u\in\mathfrak{L}_n$ of weight $\lambda$ can be expressed as a finite sum $u=\sum_{r\in I}C_rX(r)$ where $X(r)$ has weight $\lambda$ for each $r\in I$ and $C_r\in\mathbb{C}^N$. The task of finding highest weight vectors amounts to finding all such $u$ with $[X(s),u]=0$, for  $s=2e_i,e_i+e_{m+j}$ and $e_i+e_j$ with $i<j$. By linear independence
\begin{equation*}
0=[X(2e_i),u]=\sum_{r\in I}C_r(-2r_{m+i})X(r+e_i-e_{m+i})
\end{equation*}
implies that $C_r=0$ whenever $r_{m+i}\neq0$ for some $i$. Since $u$ is a weight vector this means that $u=X(r)$ for some $r$ with $r_{m+1}=\dots=r_{2m}=0$. Then
\begin{equation*}
0=[X(e_i+e_{m+j}),u]=r_jX(r+e_i-e_j)
\end{equation*}
implies $r_j=0$ for all $i,j$ with $i<j$. This leaves only the case $r=ne_1$ and thus $X(ne_1)$ is the unique highest weight vector of $\mathfrak{L}_n$ up to scalar. 
\end{proof}
Notice that $[X(e_i+e_{m+i}),X(ne_i)]=nX(ne_i)$ for $n\geq0$, suggesting that in the representation of $\mathfrak{sp}_N$ on $\mathfrak{H}_N^+$, there are infinitely many distinct eigenvalues. However the following Lemma shows that this cannot be the case, meaning that for the representation $\rho$ of Proposition \ref{PrepH}, there exists $k_0$ such that $\rho(X(ke_i))=0$  for all $k\geq k_0$. The fact that each $\mathfrak{L}_k$ is an irreducible $\mathfrak{sp}_N$-module implies that all of $\mathfrak{L}_k$ acts trivially on $V$ for $k\geq k_0$.
\begin{lem}\label{finiteeigenvals}
Let $\mathfrak{L}$ be a Lie algebra with nonzero elements $y,y_1,y_2,\dots$ with the property that
\begin{equation*}
[y,y_i]=\alpha_iy_i
\end{equation*}
for $i=1,2,\dots$, and $\alpha_i\in\mathbb{C}$. Then for a finite dimensional representation $(U,\rho)$ of $\mathfrak{L}$, there are at most $(\dim U)^2-\dim U+1$ distinct eigenvalues for which $\rho(y_i)\neq 0$.
\end{lem}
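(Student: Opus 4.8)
The plan is to exploit the fact that $\rho(y)$ is a fixed finite-dimensional operator, so the relation $[\rho(y),\rho(y_i)] = \alpha_i\rho(y_i)$ forces each nonzero $\rho(y_i)$ to be an eigenvector, with eigenvalue $\alpha_i$, of the operator $\operatorname{ad}\rho(y)$ acting on $\operatorname{End}(U)$. First I would observe that $\operatorname{ad}\rho(y): \operatorname{End}(U)\to\operatorname{End}(U)$ is a linear map on a space of dimension $(\dim U)^2$, so it has at most $(\dim U)^2$ distinct eigenvalues; already this gives a crude finite bound. To sharpen it to $(\dim U)^2-\dim U+1$, I would use the structure of the eigenvalues of $\operatorname{ad} B$ for $B = \rho(y)$: if $\mu_1,\dots,\mu_d$ are the eigenvalues of $B$ (with $d = \dim U$, listed with multiplicity), then the eigenvalues of $\operatorname{ad} B$ on $\operatorname{End}(U)$ are exactly the differences $\mu_a - \mu_b$ for $a,b\in\{1,\dots,d\}$. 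This is standard: passing to a basis in which $B$ is upper triangular, $\operatorname{ad} B$ is upper triangular on $\operatorname{End}(U)$ with diagonal entries $\mu_a-\mu_b$.

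The key counting step is then: how many distinct values can the multiset $\{\mu_a - \mu_b : 1\le a,b\le d\}$ contain? The $d$ diagonal differences $\mu_a - \mu_a = 0$ all coincide, so among the $d^2$ pairs $(a,b)$ there are at most $d^2 - d + 1$ distinct difference values — the $d$ "diagonal" pairs contribute a single value $0$, and the remaining $d^2 - d$ off-diagonal pairs contribute at most $d^2 - d$ more. Since every $\alpha_i$ with $\rho(y_i)\neq 0$ must be one of these eigenvalues of $\operatorname{ad}\rho(y)$, there are at most $d^2 - d + 1 = (\dim U)^2 - \dim U + 1$ distinct such $\alpha_i$, which is exactly the claimed bound.

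In more detail, the steps in order would be: (1) set $B = \rho(y) \in \operatorname{End}(U)$ and note $[B,\rho(y_i)] = \alpha_i\rho(y_i)$; (2) if $\rho(y_i)\neq 0$, then $\alpha_i$ is an eigenvalue of $T := \operatorname{ad} B$ on $\operatorname{End}(U)$; (3) compute the spectrum of $T$ by triangularizing $B$, obtaining $\operatorname{spec}(T) = \{\mu_a - \mu_b\}$ where $\operatorname{spec}(B) = \{\mu_1,\dots,\mu_d\}$ with multiplicity; (4) bound the number of distinct elements of $\{\mu_a - \mu_b\}$ by $d^2 - d + 1$ using that the $d$ zero-differences collapse to one value; (5) conclude. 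The main obstacle — really the only nontrivial point — is step (3), identifying the eigenvalues of $\operatorname{ad} B$ as the pairwise differences of eigenvalues of $B$; this needs a short argument (tensor decomposition $\operatorname{End}(U) \cong U \otimes U^*$ and compatibility of $\operatorname{ad} B$ with $B\otimes 1 - 1\otimes B^t$, or the upper-triangular computation), but it is entirely standard linear algebra and carries no subtlety for the cardinality count.
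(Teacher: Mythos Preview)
Your argument is correct: each nonzero $\rho(y_i)$ is an eigenvector of $\operatorname{ad}\rho(y)$ on $\operatorname{End}(U)$ with eigenvalue $\alpha_i$, the eigenvalues of $\operatorname{ad}\rho(y)$ are exactly the pairwise differences $\mu_a-\mu_b$ of the eigenvalues of $\rho(y)$, and the $d$ diagonal differences all equal $0$, leaving at most $d^2-d+1$ distinct values. The paper does not actually supply its own proof of this lemma; it simply cites \cite{B} for both the statement and the proof. Your approach is the standard one and is almost certainly what appears there, so there is nothing substantive to contrast.
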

This Lemma and its proof are given in \cite{B}. It is also used in the proof of Lemma \ref{basecase} found in \cite{BT}.

With this the groundwork is laid for the main result.
\begin{thm}\label{classification}
Let $\lambda\in\mathbb{C}^N$ and let $\mathcal{J}_{\lambda}$ be a subcategory of modules in $\mathcal{J}$ supported on $\lambda+\mathbb{Z}^N$, where $N=2m\geq 2$. There is an equivalence of categories between the category of finite dimensional modules for $\mathfrak{H}_N^+\oplus\mathfrak{h}_{N}$ and $\mathcal{J}_{\lambda}$. This equivalence maps $V$ to $A_N\otimes V$ where $V$ is a finite dimensional module for $\mathfrak{H}_N^+\oplus\mathfrak{h}_{N}$. The action of $\mathcal{H}_N$ on $A_N\otimes V$ is given by $d_a(t^{s}\otimes v)=(s_a+\lambda_a)t^s\otimes v$,  and for $r\neq 0$,
\begin{multline}\label{h(r)action}
h(r)(t^{s}\otimes v)=\sum_{i=1}^m(r_{m+i}s_i-r_is_{m+i})t^{r+s}\otimes v+t^{r+s}\otimes \rho(\mathfrak{c})v\\
+t^{r+s}\otimes\sum_{i=1}^m \left(r_i\rho(p^i)+r_{m+i}\rho(q^i)\right)v+t^{r+s}\otimes\sum_{\substack{k\in\mathbb{Z}_{\geq 0}^N\\|k|>1}}\frac{r^k}{k!}\rho(X(k))v.
\end{multline}
\end{thm}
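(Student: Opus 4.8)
### Proof Proposal

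The plan is to assemble the equivalence of categories from pieces already established in the excerpt, treating the correspondence $V \mapsto A_N \otimes V$ in both directions and checking functoriality. In the forward direction, given a finite dimensional $\mathfrak{H}_N^+ \oplus \mathfrak{h}_N$-module $V$, I would define an $\mathcal{H}_N$-action on $A_N \otimes V$ by the formulas for $d_a$ and $h(r)$ in the statement, with $\rho$ the given representation. The main verification here is that this is genuinely a Lie algebra action: one must check that the assignment respects all the brackets of $\mathcal{H}_N$, namely $[d_i,h(r)]=r_ih(r)$ (immediate from the $d_a$-action being by weights) and the bracket $[h(r),h(s)]=\sum_i(r_{m+i}s_i-r_is_{m+i})h(r+s)$. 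For the latter I would reverse the computation that produced \eqref{H(r)bracketPbracket}: define $H(r) \in \mathrm{End}(V)$ by the polynomial formula of Proposition \ref{PolynomialAction} with $P^{(k)} = \rho(X(k))$ for $|k|>1$, $P^{(e_i)}=\rho(p^i)$, $P^{(e_{m+i})}=\rho(q^i)$, $P^{(0)}=\rho(\mathfrak{c})$, and verify that the computed brackets $[P^{(j)},P^{(k)}]$ — which by Proposition \ref{PrepH} match the brackets dictated by $\rho$ being a representation — force the identity $[H(r),H(s)] = \sum_i(r_is_{m+i}-r_{m+i}s_i)(H(r)+H(s)-H(r+s))$ for $r,s \neq 0$, and then that \eqref{hdependsonH} with this $H$ yields $[h(r),h(s)]=\sum_i(r_{m+i}s_i-r_is_{m+i})h(r+s)$ on $A_N \otimes V$. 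One must be slightly careful at $r=0$, $s=0$, or $r+s=0$, where $h(0)=0$ and the delta terms intervene; here I would observe that $h(0)$ acts as zero by fiat and check the degenerate brackets directly. Property (J1) holds since $d_a$ acts diagonalizably with eigenvalue $s_a+\lambda_a$ on $t^s \otimes v$; (J2) holds since $A_N \otimes V$ is free of rank $\dim V$; and (J3) holds by construction of the $h(r)$-formula together with $d_a(f v) = (d_a f)v + f(d_a v)$.

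For the reverse direction, I would start from an arbitrary $J \in \mathcal{J}_\lambda$, set $V = J_\lambda$, and invoke the reductions of Section 2: $J \cong A_N \otimes V$ as an $A_N$-module, the action of $h(r)$ is determined by $H(r) = t^{-r} \circ h(r) \in \mathrm{End}(V)$ via \eqref{hdependsonH}, and by Proposition \ref{PolynomialAction} each $H(r)$ is the polynomial $\sum_{k}\frac{r^k}{k!}P^{(k)} - \delta_{r,0}P^{(0)}$ for operators $P^{(k)}$ independent of $r$. Proposition \ref{PrepH} then packages the $P^{(k)}$ into a representation $\rho$ of $\mathfrak{H}_N^+ \oplus \mathfrak{h}_N$ on $V$, recovering the data. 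I would note that these two constructions are mutually inverse on objects — applying the forward construction to $\rho$ and then extracting $V = (A_N \otimes V)_\lambda$ returns $(V,\rho)$ because the $\lambda$-weight space of $A_N \otimes V$ is $1 \otimes V$ and the induced $H(r)$ operators are exactly the polynomial with coefficients $\rho(X(k))$ etc.; conversely, starting from $J$, passing to $(V,\rho)$ and rebuilding gives a module with the same $A_N$-structure and the same $H(r)$, hence the same $h(r)$-action by \eqref{hdependsonH}, hence isomorphic (indeed equal) to $J$.

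Finally, for functoriality I would check that a morphism $\phi: V \to V'$ of $\mathfrak{H}_N^+ \oplus \mathfrak{h}_N$-modules induces $\mathrm{id}_{A_N} \otimes \phi: A_N \otimes V \to A_N \otimes V'$, which is visibly $A_N$-linear, commutes with the $d_a$ (degree-preserving), and commutes with the $h(r)$ because the $h(r)$-formula \eqref{h(r)action} is built from $\rho$-operators that $\phi$ intertwines; conversely any $\mathcal{H}_N$-and-$A_N$-module map restricts to a map $J_\lambda \to J'_\lambda$ intertwining the induced $H(r)$, hence the $P^{(k)}$, hence a $\mathfrak{H}_N^+ \oplus \mathfrak{h}_N$-module map. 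Together with the bijection on objects this gives an equivalence (in fact an isomorphism) of categories. The step I expect to be the main obstacle is the bracket verification in the forward direction: showing that the $\mathrm{End}(V)$-valued polynomial $H(r)$ assembled from an \emph{arbitrary} representation $\rho$ satisfies the quadratic functional equation $[H(r),H(s)] = \sum_i(r_is_{m+i}-r_{m+i}s_i)(H(r)+H(s)-H(r+s))$ for all $r,s\in\mathbb{Z}^N$. The cleanest route is to verify it at the level of the bracket coefficients $[P^{(j)},P^{(k)}]$ — where it is equivalent to $\rho$ respecting the brackets of $\mathfrak{H}_N^+ \oplus \mathfrak{h}_N$, which holds by hypothesis — and then argue that equality of the two $\mathrm{End}(V)$-valued polynomials in $(r,s)$ on $\mathbb{Z}^N \times \mathbb{Z}^N$ follows, using Lemma \ref{equalpolynomials}, from their agreement coefficient-by-coefficient, with the finitely many degenerate loci ($r$, $s$, or $r+s$ zero) handled separately by the $h(0)=0$ convention.
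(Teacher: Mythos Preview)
Your overall architecture matches the paper's proof and is in places more thorough (you spell out functoriality and the mutual-inverse check on objects, which the paper leaves implicit). However, there is one genuine gap in the forward direction that the paper addresses and you do not.

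When you start from an \emph{arbitrary} finite dimensional representation $\rho$ of $\mathfrak{H}_N^+\oplus\mathfrak{h}_N$ and set $P^{(k)}=\rho(X(k))$ for $|k|>1$, you write down $H(r)=\sum_{k}\frac{r^k}{k!}P^{(k)}$ and proceed as if this were a polynomial. But $\mathfrak{H}_N^+$ is infinite dimensional, and nothing in the definition of a representation forces $\rho(X(k))=0$ for all but finitely many $k$; without that, $H(r)$ is an infinite sum in $\mathrm{End}(V)$ and the formula \eqref{h(r)action} is not even well defined. You cannot borrow the finiteness clause from Proposition~\ref{PolynomialAction}, since that was proved only for the $P^{(k)}$ arising from an actual category $\mathcal{J}$ module, not for operators manufactured from a given $\rho$.

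The paper closes this gap using Proposition~\ref{spmodules} and Lemma~\ref{finiteeigenvals}. From $[X(e_1+e_{m+1}),X(ne_1)]=nX(ne_1)$ one sees that the $X(ne_1)$ are eigenvectors for $\mathrm{ad}\,X(e_1+e_{m+1})$ with distinct eigenvalues; Lemma~\ref{finiteeigenvals} then bounds the number of $n$ with $\rho(X(ne_1))\neq0$ by $(\dim V)^2-\dim V+1$. Since $X(ne_1)$ is the highest weight vector of the irreducible $\mathfrak{sp}_N$-module $\mathfrak{L}_{n-2}$ (Proposition~\ref{spmodules}), and the kernel of $\rho$ restricted to each $\mathfrak{L}_k$ is an $\mathfrak{sp}_N$-submodule, it follows that $\mathfrak{L}_k$ acts trivially for all $k\geq k_0$. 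This is what makes $H(r)$ a genuine polynomial and the rest of your argument go through; you should invoke these two results explicitly. (A minor aside: your appeal to Lemma~\ref{equalpolynomials} at the end is unnecessary---once the coefficients $[P^{(j)},P^{(k)}]$ match, the two polynomials in $(r,s)$ are identically equal, no interpolation needed.)
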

\begin{proof}
Following the definition of category $\mathcal{J}$, it was noted that each module in $\mathcal{J}_{\lambda}$ may be identified with $A_N\otimes V$, where $V$ is any weight space. Combining (J1) and (J3) gives the action for $d_i$, and applying Proposition \ref{PolynomialAction} to (\ref{hdependsonH}) yields
\begin{equation*}
h(r)(t^{s}\otimes v)=\sum_{i=1}^m(r_{m+i}s_i-r_is_{m+i})t^{r+s}\otimes v+t^{r+s}\otimes\sum_{k\in\mathbb{Z}_{\geq 0}^N}\frac{r^k}{k!}P^{(k)}v,
\end{equation*}
for $r\neq0$, and $h(0)=0$ otherwise. Proposition \ref{PrepH} shows that $V$ is a finite dimensional $\mathfrak{H}_N^+\oplus\mathfrak{h}_{N}$-module, and (\ref{h(r)action}) is obtained from the expression above.

On the other hand suppose $V$ is a finite dimensional module for $\mathfrak{H}_N^+\oplus\mathfrak{h}_{N}$. Identify the elements of $\mathfrak{H}_N^+\oplus\mathfrak{h}_{N}$ with the $P^{(k)}$ as in Proposition \ref{PrepH} and let $H(r)=\sum_{k\in\mathbb{Z}_{\geq 0}^N}\frac{r^k}{k!}P^{(k)}$. This sum has only finitely many nonzero terms because in the decomposition $\mathfrak{H}_N^+=\bigoplus_{k=0}^{\infty}\mathfrak{L}_k$, there exits $k_0$ such that $\mathfrak{L}_k$ acts trivially on $V$ for all $k\geq k_0$. The Lie bracket for $H(r)$ terms is obtained from (\ref{H(r)bracketPbracket}), and defining the action of $h(r)$ on $A_N\otimes V$ by (\ref{hdependsonH}) recovers the commutator relations for $\mathcal{H}_N$. Thus $A_N\otimes V$ is a finite dimensional $\mathcal{H}_N$-module.
\end{proof}
\section{Irreducible Representations}
The action of $\mathcal{H}_N$ simplifies considerably when only simple modules from category $\mathcal{J}$ are considered. The central element of the Heisenberg algebra acts trivially in a finite dimensional irreducible representation, and so an $N$ dimensional abelian algebra $\mathfrak{a}_N=\text{Span}_{\mathbb{C}}\left\{C_i|i\in\{1,\dots,N\}\right\}$ replaces $\mathfrak{h}_{N}$.

The following Lemma will be used to show that the action of $\mathfrak{H}_N^+$ reduces to that of $\mathfrak{sp}_N$. See Lemma 2.4 in \cite{CS} for the proof.
\begin{lem}[\cite{CS}, Lemma 2.4]\label{trivialaction}
Let $\mathfrak{g}$ be a finite dimensional Lie algebra over $\mathbb{C}$ with solvable radical $\text{\emph{Rad}}(\mathfrak{g})$. Then $[\mathfrak{g},\text{\emph{Rad}}(\mathfrak{g})]$ acts trivially on any finite dimensional irreducible $\mathfrak{g}$-module.
\end{lem}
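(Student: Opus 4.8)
The final statement to prove is Lemma~\ref{trivialaction}, a standard fact about finite dimensional Lie algebras. Here is how I would approach it.

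\medskip

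The plan is to reduce to the semisimple quotient and then invoke Lie's theorem together with the Jacobson density / Schur-type argument. First I would let $\mathfrak{g}$ be the finite dimensional Lie algebra, $\mathfrak{r} = \text{Rad}(\mathfrak{g})$ its solvable radical, and $W$ a finite dimensional irreducible $\mathfrak{g}$-module with representation $\pi: \mathfrak{g} \to \mathfrak{gl}(W)$. Since $\mathfrak{r}$ is a solvable ideal, restrict $\pi$ to $\mathfrak{r}$ and apply Lie's theorem: there is a common eigenvector, and more to the point the elements of $\mathfrak{r}$ act (after passing to the appropriate flag) by upper triangular matrices, so each $x \in \mathfrak{r}$ has a well-defined linear functional $\mu_x$ giving its eigenvalues, and in particular $\pi(x) - \mu_x(x)\,\mathrm{id}$ is nilpotent. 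The key classical lemma (the ``invariance of weights'' step in the proof of Lie's theorem) is that because $\mathfrak{r}$ is an ideal in $\mathfrak{g}$, the weight $\mu$ that occurs is stable under the action of all of $\mathfrak{g}$; this forces $\mu([\mathfrak{g},\mathfrak{r}]) = 0$. Hence every element of $[\mathfrak{g},\mathfrak{r}]$ acts on $W$ by a nilpotent operator.

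\medskip

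Next I would upgrade ``acts nilpotently'' to ``acts as zero.'' Note $[\mathfrak{g},\mathfrak{r}]$ is itself an ideal of $\mathfrak{g}$ (it is contained in $\mathfrak{r}$ and is $\mathrm{ad}$-stable), so $\mathfrak{n} := \pi([\mathfrak{g},\mathfrak{r}])$ is an ideal of the Lie algebra $\pi(\mathfrak{g}) \subseteq \mathfrak{gl}(W)$ consisting of nilpotent endomorphisms. By Engel's theorem $\mathfrak{n}$ is a nilpotent Lie algebra, and more importantly the associative subalgebra of $\mathrm{End}(W)$ it generates is nilpotent (a set of nilpotents closed under bracket that forms an ideal annihilates a nonzero subspace). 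Concretely: the subspace $W_0 = \{w \in W : \mathfrak{n}\cdot w = 0\}$ is nonzero by Engel's theorem applied to $\mathfrak{n}$, and it is $\pi(\mathfrak{g})$-invariant because $\mathfrak{n}$ is an ideal: for $y \in \mathfrak{g}$, $z \in [\mathfrak{g},\mathfrak{r}]$, $w \in W_0$ one has $\pi(z)\pi(y)w = \pi(y)\pi(z)w - \pi([y,z])w = 0$ since both $z$ and $[y,z]$ lie in $[\mathfrak{g},\mathfrak{r}]$. By irreducibility of $W$ we get $W_0 = W$, i.e. $[\mathfrak{g},\mathfrak{r}]$ acts trivially on $W$.

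\medskip

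I expect the main obstacle to be handling the base field carefully in the Lie's-theorem step: Lie's theorem and the weight-invariance lemma require an algebraically closed field of characteristic zero, which is exactly why the statement is over $\mathbb{C}$, so there is nothing to worry about here, but one must be careful to state the invariance of the weight $\mu$ under $\mathfrak{g}$ correctly (the standard argument uses that $\mathfrak{r}$ is an ideal and that $\mathrm{char} = 0$ to conclude $\mu([\mathfrak{g},\mathfrak{r}])=0$ from a rank/trace computation on an $\mathrm{ad}$-invariant flag). The second potential subtlety is making sure $W_0$ is genuinely $\pi(\mathfrak{g})$-stable and not merely $\pi(\mathfrak{r})$-stable; the computation above using that $[\mathfrak{g},\mathfrak{r}]$ is an ideal handles this. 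Since this is a known result cited from \cite{CS}, in the paper I would simply remark that it follows from Lie's theorem and Engel's theorem as above and refer the reader there for details.
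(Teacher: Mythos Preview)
Your argument is correct, and in fact the paper itself gives no proof of this lemma: it simply cites \cite{CS}, Lemma~2.4, so there is no in-paper argument to compare against. Your Lie--Engel route is the standard one. One small streamlining is available in the irreducible setting: once the invariance-of-weights lemma tells you the honest weight space $W_\lambda=\{w: x\cdot w=\lambda(x)w\ \text{for all }x\in\mathfrak{r}\}$ is $\mathfrak{g}$-stable, irreducibility already forces $W_\lambda=W$, so every $x\in\mathfrak{r}$ acts as the scalar $\lambda(x)$; then $\pi([y,x])=[\pi(y),\pi(x)]$ has trace zero, giving $\lambda([y,x])\dim W=0$ directly and eliminating the separate Engel step. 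Also, your phrase ``each $x\in\mathfrak{r}$ has a well-defined linear functional $\mu_x$'' is notationally muddled---you mean a single functional $\mu\in\mathfrak{r}^*$ (or finitely many weights), not one per element---but this does not affect the validity of the argument.
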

To apply this lemma let $\mathfrak{g}=\mathfrak{H}_N^+/I\oplus\mathfrak{a}_N$, where $I$ is the ideal $I=\bigoplus_{k\geq k_0}\mathfrak{L}_k$ which acts trivially on $V$ for some $k_0$. Then $\text{Rad}(\mathfrak{g})=(\bigoplus_{k>0}\mathfrak{L}_k)/I\oplus\mathfrak{a}_N$ since $[\mathfrak{L}_n,\mathfrak{L}_k]\subset\mathfrak{L}_{n+k}$. So $[\mathfrak{g},\text{Rad}(\mathfrak{g})]=(\bigoplus_{k>0}\mathfrak{L}_k)/I$ acts trivially by Lemma \ref{trivialaction}, and hence the ideal $\bigoplus_{k>0}\mathfrak{L}_k$ must act trivially on $V$.
\begin{thm}\label{irreducibleclassification}
Let $\lambda\in\mathbb{C}^N$ and let $\mathcal{J}_{\lambda}$ be a subcategory of modules in $\mathcal{J}$ supported on $\lambda+\mathbb{Z}^N$. For $N=2m\geq 2$, there is a one-to-one correspondence between the finite dimensional irreducible modules for $\mathfrak{sp}_N\oplus\mathfrak{a}_N$ and the irreducible modules in $\mathcal{J}_{\lambda}$. This correspondence maps a finite dimensional irreducible module $V$  for $\mathfrak{sp}_N\oplus\mathfrak{a}_N$ to $A_N\otimes V$. The action of $\mathcal{H}_N$ on $A_N\otimes V$ is given by $d_a(t^{s}\otimes u)=(s_a+\lambda_a)t^s\otimes v$ and for all $r$
\begin{multline*}
h(r)(t^{s}\otimes v)=\sum_{i=1}^m(r_{m+i}(s_i+\mu_{m+i})-r_i(s_{m+i}+\mu_i))t^{r+s}\otimes v\\
+t^{r+s}\otimes\sum_{i=1}^m\left(r_{m+i}^2\varphi(E_{m+i,i})+r_ir_{m+i}\varphi(E_{i,i}-E_{m+i,m+i})-r_i^2\varphi(E_{i,m+i})\right)v\\
+t^{r+s}\otimes\sum_{\substack{i,j=1\\i< j}}^m\left(r_{m+i}r_{m+j}\varphi(E_{m+j,i}+E_{m+i,j})+r_ir_{m+j}\varphi(E_{i,j}-E_{m+i,m+j})\right.\\
\left.-r_ir_j\varphi(E_{i,m+j}+E_{j,m+i})\right)v,
\end{multline*}
where $\mu_i$ is the action of $C_i\in\mathfrak{a}_N$, and $\varphi$ is an irreducible representation of $\mathfrak{sp}_N$.
\end{thm}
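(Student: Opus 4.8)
The plan is to derive Theorem \ref{irreducibleclassification} as a specialization of Theorem \ref{classification} together with the structural reductions already assembled in Section 5. The starting point is that an irreducible module in $\mathcal{J}_\lambda$ corresponds under Theorem \ref{classification} to a finite dimensional $\mathfrak{H}_N^+\oplus\mathfrak{h}_N$-module $V$, and $V$ is irreducible for this algebra exactly when $A_N\otimes V$ is irreducible in $\mathcal{J}_\lambda$ (this follows because every $\mathcal{H}_N$-submodule of $A_N\otimes V$ is $A_N$-invariant by (J2)--(J3), so it is of the form $A_N\otimes W$ for a subspace $W\subseteq V$, and one checks $W$ is then $\mathfrak{H}_N^+\oplus\mathfrak{h}_N$-stable; conversely an invariant $W$ yields an $\mathcal{H}_N$-submodule). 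Thus it suffices to classify the finite dimensional irreducible $\mathfrak{H}_N^+\oplus\mathfrak{h}_N$-modules and rewrite the action \eqref{h(r)action} accordingly.

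Next I would carry out the two reductions stated before the theorem. First, since $V$ is finite dimensional irreducible, the center $\mathfrak{c}$ of the Heisenberg algebra $\mathfrak{h}_N$ acts by a scalar, and that scalar must be zero (otherwise $p^i,q^i$ would generate an infinite dimensional image); hence $\rho(\mathfrak{c})=0$ and $p^i,q^i$ act by commuting operators, so $\mathfrak{h}_N$ may be replaced by the abelian algebra $\mathfrak{a}_N$ with $C_i\mapsto \rho(p^i)$, $C_{m+i}\mapsto\rho(q^i)$, acting by scalars $\mu_i$ by Schur's lemma (after passing to the irreducible module, each $C_i$ acts as a scalar). Second, I would invoke the argument already given using Lemma \ref{trivialaction}: setting $\mathfrak{g}=\mathfrak{H}_N^+/I\oplus\mathfrak{a}_N$ with $I=\bigoplus_{k\ge k_0}\mathfrak{L}_k$ (finite dimensionality of $V$ together with Lemma \ref{finiteeigenvals} and the irreducibility of each $\mathfrak{L}_k$ as an $\mathfrak{sp}_N$-module from Proposition \ref{spmodules} guarantees such a $k_0$), one identifies $\mathrm{Rad}(\mathfrak{g})=(\bigoplus_{k>0}\mathfrak{L}_k)/I\oplus\mathfrak{a}_N$ and concludes $[\mathfrak{g},\mathrm{Rad}(\mathfrak{g})]=(\bigoplus_{k>0}\mathfrak{L}_k)/I$ acts trivially, so the whole ideal $\bigoplus_{k>0}\mathfrak{L}_k$ kills $V$. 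What survives is an action of $\mathfrak{L}_0\oplus\mathfrak{a}_N\cong\mathfrak{sp}_N\oplus\mathfrak{a}_N$, and the irreducible such modules are precisely $\varphi\boxtimes(\text{scalars }\mu)$ for $\varphi$ a finite dimensional irreducible $\mathfrak{sp}_N$-representation, giving the claimed one-to-one correspondence.

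Finally I would substitute these vanishings into \eqref{h(r)action}: the last sum (over $|k|>1$) dies entirely, $\rho(\mathfrak{c})=0$, and $\rho(p^i),\rho(q^i)$ become the scalars $\mu_i,\mu_{m+i}$, which can be absorbed into the first sum to produce the $s_i+\mu_i$ shift. Since $h(0)=0$ and the degree-one piece is now linear, the formula in fact holds for all $r$ including $r=0$. The remaining task is purely bookkeeping: re-express $P^{(e_a+e_b)}=\rho(X(e_a+e_b))$ through the isomorphism $\mathfrak{L}_0\cong\mathfrak{sp}_N$ of Proposition \ref{spmodules}, matching $x_i\partial_{x_j}-x_{m+j}\partial_{x_{m+i}}\leftrightarrow E_{ij}-E_{m+j,m+i}$ etc., and collecting the coefficient of each $r_ar_b$ from $\frac{r^{e_a+e_b}}{(e_a+e_b)!}$ — this accounts for the factors $r_{m+i}^2$, $r_ir_{m+i}$, $r_i^2$ on the diagonal blocks and the cross terms for $i<j$. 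The main obstacle, such as it is, is not conceptual but the careful matching of the quadratic monomials $\frac{r^k}{k!}$ for $|k|=2$ against the symplectic basis vectors so that every sign and binomial factor in the displayed formula comes out correctly; everything else is an immediate consequence of Theorem \ref{classification} and the two lemmas.
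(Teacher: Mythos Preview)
Your approach is essentially the same as the paper's: reduce via Theorem~\ref{classification} to finite dimensional irreducibles for $\mathfrak{H}_N^+\oplus\mathfrak{h}_N$, kill the center of $\mathfrak{h}_N$ by finite dimensionality, kill $\bigoplus_{k>0}\mathfrak{L}_k$ via Lemma~\ref{trivialaction}, apply Schur to $\mathfrak{a}_N$, and substitute into~\eqref{h(r)action}. You are in fact more explicit than the paper on two points it leaves implicit: why irreducibility transfers across the equivalence (your $A_N\otimes W$ argument), and why the resulting formula extends to $r=0$.

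There is one slip in the final paragraph: you write that ``the last sum (over $|k|>1$) dies entirely,'' but that is not what you want --- only the terms with $|k|\geq 3$ vanish (these are the ones lying in $\bigoplus_{k>0}\mathfrak{L}_k$), while the $|k|=2$ terms constitute the surviving $\mathfrak{L}_0\cong\mathfrak{sp}_N$ action. You clearly know this, since your very next sentence is about re-expressing the $P^{(e_a+e_b)}$ via the $\mathfrak{sp}_N$ isomorphism, so this is a wording error rather than a gap; just replace ``dies entirely'' with ``reduces to its $|k|=2$ part.''
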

\begin{proof}
Since $\bigoplus_{k>0}\mathfrak{L}_k$ acts trivially on $V$, the action of $\mathfrak{H}_N^+$ reduces to that of $\mathfrak{L}_0\cong \mathfrak{sp}_N$. As mentioned above, the Heisenberg algebra is replaced by abelian algebra $\mathfrak{a}_N$, and by Schur's Lemma its elements $C_i$ act by scalars $\mu_i$ in an irreducible representation. Thus the action in (\ref{h(r)action}) reduces to that above. Theorem \ref{classification} gives the correspondence between these sets of modules. 
\end{proof}
\section{Acknowledgements}
Many thanks to Professor Yuly Billig for his generous help on this paper.


\end{document}